\renewcommand{\rho}{\varrho}
\newtheorem{theorem}{Theorem}
\newtheorem{corollary}{Corollary}
\newtheorem{proposition}{Proposition}
\newtheorem{lemma}{Lemma}
\def\tue#1\eut{}
\let\le\leqslant
\let\ge\geqslant
\def \d{\ {\rm d}}
\def\AA{\mathcal A}
\def\CC{\mathcal C}
\def\FF{\mathcal F}
\def\HH{\mathcal H}
\def\TT{\mathcal T}
\def\WW{\mathcal W}
\def\eef{\mathbb E}
\def\nnf{\mathbb N}
\def\rrf{\mathbb R}
\def\wwf{\mathbb W}
\def\one{\mathbf 1}
\long\def\tue#1\eut{}
\def\ep{\varepsilon}
\begin{document}

\title{CHARACTERISING OCONE LOCAL MARTINGALES WITH REFLECTIONS}
\author{Jean Brossard and Christophe Leuridan} 
%
%
\maketitle
\begin{abstract}
Let $M = (M_t)_{t \ge 0}$ be any continuous real-valued stochastic process such 
that $M_0=0$. Chaumont and Vostrikova proved 
that if there exists a sequence $(a_n)_{n \ge 1}$ of positive real numbers 
converging to $0$ such that $M$ satisfies the reflection principle at levels
$0$, $a_n$ and $2a_n$, for each $n \ge 1$, then $M$ is an Ocone local 
martingale. They also asked whether the reflection principle at levels
$0$ and $a_n$ only (for each $n \ge 1$) is sufficient to ensure that 
$M$ is an Ocone local martingale. 

We give a positive answer to this question, using a slightly different 
approach, which provides the following intermediate result. Let $a$ and $b$
be two positive real numbers such that $a/(a+b)$ is not dyadic. 
If $M$ satisfies the reflection principle at the level $0$ 
and at the first passage-time in $\{-a,b\}$, then $M$ is close to a local 
martingale in the following sense: $|\eef[M_{S \circ M}]| \le a+b$ 
for every stopping time $S$ in the 
canonical filtration of $\wwf = \{w \in \CC(\rrf_+,\rrf) : w(0)=0\}$ 
such that the stopped process $M_{\cdot \wedge (S \circ M)}$ is uniformly 
bounded. 
\end{abstract}

\noindent\textbf{MSC 2000:} 60G44, 60G42, 60J65.\\
Keywords: Ocone martingales, reflection principle.

\def\ge{\geqslant}

\section{Introduction}

Let $(M_t)_{t \ge 0}$ denote a continuous local martingale, defined on some 
probability space $(\Omega,\AA,P)$, such that $M_0=0$. 
Let $\FF^M$ denote its natural filtration and $\HH$ the set of all predictable 
processes with respect to $\FF^M$ with values in $\{-1,1\}$. Then for every 
$H \in \HH$, the local martingale 
$$H \cdot M = \int_0^\cdot H_s \d M_s$$
has the same quadratic variation as $M$. In particular, if $M$ is a 
Brownian motion, then $H \cdot M$ is still a Brownian motion.

A natural problem is to determine when $H \cdot M$ has the same 
law as $M$ for every $H \in \HH$.
Ocone proved in~\cite{Ocone} that a necessary and sufficient condition 
is that $M$ is a Gaussian martingale conditionally on its quadratic 
variation $\langle M \rangle$. Such processes are called 
{\it Ocone local martingales}. Various characterisations of these processes 
have been given, by Ocone himself, by Dubins, \'Emery and Yor 
in~\cite{Dubins - Emery - Yor}, by Vostrikova and Yor 
in~\cite{Vostrikova - Yor}. We refer to~\cite{Chaumont - Vostrikova} for a 
more complete presentation.  
 
The following characterisation, given by Dubins, \'Emery and Yor, is 
particularly illuminating: 
$M$ is an Ocone local martingale if and only if there exists a 
Brownian motion $\beta$ (possibly defined on a larger probability space) 
which is independent of $\langle M \rangle$ and such that 
$M_t = \beta_{\langle M \rangle_t}$ for every $t$. Loosely speaking, 
Ocone local martingales are the processes obtained by the composition 
of a Brownian motion and an independant time-change. 

Another characterisation of Ocone local martingales is based on their 
invariance with respect to reflections. For every positive real $r$, 
call $h_r$ the map from $\rrf_+$ to $\{-1,1\}$ defined by  
$$
h_r(t) = \one_{[t \le r]} - \one_{[t > r]}.
$$
Then $h_r \cdot M = 
\rho_r \circ M$, where $\rho_r$ is the {\it reflection at time} $r$. 
Let $\wwf$ denote the set of all continuous functions 
$w : \rrf_+ \to \rrf$ such that $w(0)=0$. The 
transformation $\rho_r$ maps $\wwf$ into itself and is defined by 
$$
\rho_r(w)(t) = \left\{\begin{array}{ll} w(t) & \text{ if } t \le r,\\ 2w(r)-w(t) & \text{ if } t \ge r.\end{array}\right.
$$
The functions $h_r$ are sufficient to characterise Ocone local martingales:
Theorem~A of~\cite{Ocone} states that if $h_r \cdot M$ has the same 
law as $M$ for every positive $r$, then $H \cdot M$ has the same 
law as $M$ for every $H \in \HH$. In other words, if the law of $M$ 
is invariant by the reflections at fixed times, then $M$ is an Ocone local 
martingale. Note that it is not necessary to assume that $M$ is a local 
martingale since the invariance by the reflections at fixed times implies 
that for every $t \ge s \ge 0$, the law of the increment $M_t-M_s$ is 
symmetric conditionally on $\FF^M_s$.

The celebrated reflection 
principle due to Andr\'e~\cite{Andre} shows that it may be worthwhile to 
consider reflections at first-passage times, which we now define.
For every real $a$ and $w \in \wwf$, note $T_a(w)$ the first-passage 
time of $w$ at level $a$. The reflection at time $T_a$ transforms $w$ 
into $\rho_{T_a}(w)$ where
$$
\rho_{T_a}(w)(t) = \left\{\begin{array}{ll} w(t) & \text{ if } t \le T_a(w),\\ 2a-w(t) & \text{ if } t \ge T_a(w).\end{array}\right.
$$
Note that $\rho_{T_a}(w)=w$ if $T_a(w)$ is infinite. 

Chaumont and Vostrikova recently established in \cite{Chaumont - Vostrikova} that any continuous 
process whose law is invariant by the reflections at first-passage times 
is an Ocone local martingale. Actually, their result is even stronger.

\begin{theorem}[Theorem 1 of~\cite{Chaumont - Vostrikova}]\label{t.cv}
Let $M$ be any continuous stochastic process such that $M_0=0$. 
If there exists a sequence $(a_n)_{n \ge 1}$ of positive real 
numbers converging to $0$ 
such that the law of $M$ is invariant by the reflections at 
times $T_0=0$, $T_{a_n}$ and $T_{2a_n}$, then $M$ is an Ocone local 
martingale. Moreover, if $T_{a_1} \circ M$ is almost surely finite,
then $M$ is almost surely divergent.
\end{theorem}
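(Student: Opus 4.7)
The plan is to apply the Dubins--\'Emery--Yor characterization recalled above: build a Brownian motion $\beta$, independent of $\langle M \rangle$, such that $M_t = \beta_{\langle M \rangle_t}$. I will construct $\beta$ as a scaling limit of random walks embedded in $M$ along stopping times associated to the scales $a_n$, and propagate the independence from each finite scale to the limit.

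First, I would show that $M$ is a continuous local martingale. Reflection at $T_0=0$ gives $-M \stackrel{\mathrm{law}}{=} M$. Combining this with invariance under $\rho_{T_{a_n}}$ and $\rho_{T_{2 a_n}}$ (and using $a_n \to 0$), one shows that for every stopping time $\sigma$ of $\FF^M$ for which $M_{\cdot \wedge \sigma}$ is uniformly bounded, $\eef[M_\sigma] = 0$, which is a standard characterisation of continuous local martingales. In particular $\langle M \rangle$ exists.

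Next, for each $n \ge 1$, I would define stopping times $\sigma_0^{(n)} := 0$ and, inductively, $\sigma_{k+1}^{(n)} := \inf\{t \ge \sigma_k^{(n)} : |M_t - M_{\sigma_k^{(n)}}| = a_n\}$. The key claim is that $(M_{\sigma_k^{(n)}})_{k \ge 0}$ is a symmetric simple random walk on $a_n \zzf$, independent of the holding times $(\sigma_{k+1}^{(n)} - \sigma_k^{(n)})_{k \ge 0}$. The three levels play distinct roles: reflection at $T_0$ yields $\ppf(M_{\sigma_1^{(n)}} = \pm a_n) = 1/2$; reflection at $T_{a_n}$ renders the transitions out of level $a_n$ symmetric between $0$ and $2 a_n$; reflection at $T_{2 a_n}$ plays the same role one level up; an induction on $k$ (together with $-M \stackrel{\mathrm{law}}{=} M$ to handle negative levels) extends the symmetry to the whole grid $a_n \zzf$. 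Applying the same reflections after conditioning on the walk delivers the independence of the walk from the holding times. I would then pass to $n \to \infty$ via Donsker's invariance principle applied to the embedded walks, using that $\eef[\langle M \rangle_{\sigma_k^{(n)}}] = k a_n^2$ (from optional stopping for $M^2 - \langle M \rangle$); the walks, time-indexed by $\langle M \rangle$, converge to a Brownian motion $\beta$ with $M_t = \beta_{\langle M \rangle_t}$, and $\beta \perp \langle M \rangle$ is inherited from the finite-scale independence.

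The main obstacle I anticipate is the passage to the limit for the independence claim: the grids $(\sigma_k^{(n)})_k$ at different scales $n$ are not nested, so propagating the independence of walk and holding times to the independence of $\beta$ from $\langle M \rangle$ requires either a careful argument on joint finite-dimensional distributions or a uniqueness statement of Dubins--Schwartz type to identify the time-change. Finally, the divergence statement is a direct consequence of the representation once established: the event $\{T_{a_1}(M) < \infty\}$ coincides with $\{\exists\, t \le \langle M \rangle_\infty : \beta_t = a_1\}$, and this event having probability $1$ together with $\beta \perp \langle M \rangle$ forces $\langle M \rangle_\infty = +\infty$ almost surely, whence oscillation of $\beta$ yields $\limsup_t M_t = +\infty$ and $\liminf_t M_t = -\infty$ a.s.
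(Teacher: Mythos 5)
Your overall strategy --- embed simple random walks in $M$ at the successive times of displacement $a_n$, show the embedded walk is symmetric and independent of its holding times using the three reflections, then pass to the limit --- is essentially the route of Chaumont and Vostrikova's original proof, not the route of this paper, which obtains Theorem~\ref{t.cv} as a special case of the stronger Theorem~\ref{main result} via the stability properties of the class $\TT_Q$, the non-dyadic doubling-map construction of the $\tau_n$, and a conditional Dambis--Dubins--Schwarz argument. Taking the discretization route is legitimate, but your sketch leaves the genuinely hard step unproved.

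The gap is the induction ``reflection at $T_{a_n}$ renders the transitions out of level $a_n$ symmetric \dots\ an induction on $k$ extends the symmetry to the whole grid.'' The hypothesis only gives invariance under reflection at the \emph{first}-passage times $T_0$, $T_{a_n}$, $T_{2a_n}$. The $k$-th step of your embedded walk occurs at $\sigma_k^{(n)}$, which for $k\ge 2$ is not a first-passage time of $M$ at any level, so none of the assumed reflections directly symmetrizes that transition. What is actually needed is that $\rho_{\sigma_k^{(n)}}$ preserves the law of $M$ for every $k$, and establishing this is the combinatorial heart of both proofs: one must show that, on each atom determined by the first $k$ signs of the walk, $\sigma_k^{(n)}$ coincides with a first-passage time of a path obtained from $M$ by composing the given reflections on the order of $2^k$ times (this is Lemma~1 of~\cite{Chaumont - Vostrikova}, and Lemma~\ref{explicit formula} together with Proposition~\ref{key} here), and that $\TT_Q$ is stable under such compositions and under optional mixtures (Lemmas~\ref{mixture} and~\ref{composition with reflections}). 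Without this, neither the symmetry of the embedded walk, nor its independence from the holding times, nor your preliminary claim that $\eef[M_\sigma]=0$ for uniformly bounded stopped processes, is justified. You also correctly flag, but do not close, the second gap: transferring the scale-by-scale independence to the independence of $\beta$ from $\langle M\rangle$; both papers sidestep your non-nested-grids problem by first establishing the local martingale property and then arguing conditionally on $\langle X\rangle=f$, where the time-change representation identifies the conditional law outright. Your treatment of the final divergence statement is correct and matches the paper's argument.
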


We note that the assumption that the law of $M$ is invariant by the reflection $\rho_0$ 
is missing in~\cite{Chaumont - Vostrikova} and that it cannot be omitted: consider 
for example the deterministic process defined by $M_t=-t$. However, if 
$\inf\{t \ge 0 : M_t>0\}$ is $0$ almost surely, the invariance by $\rho_0$ is 
a consequence of the invariance by the reflections $\rho_{T_{a_n}}$.

To prove Theorem~\ref{t.cv} above, Chaumont and Vostrikova establish 
a discrete version of the theorem and they apply it to some 
discrete approximations of $M$. The discrete version 
(Theorem~3 in~\cite{Chaumont - Vostrikova}) states that if 
$(M_n)_{n \ge 0}$ is a discrete time skip-free process 
(this means that $M_0=0$ and $M_n-M_{n-1} \in \{-1,0,1\}$ for every $n \ge 1$)
whose law is invariant by the reflections at times $T_0$, $T_1$ 
and $T_2$, then $(M_n)_{n \ge 0}$ is a discrete Ocone martingale 
(this means that $(M_n)_{n \ge 0}$ is obtained by the composition of 
a symmetric Bernoulli random walk with an independent skip-free time 
change). 

The fact that the three invariances by the reflections at times $T_0$, $T_1$, 
and $T_2$ are actually useful (two of them would not be sufficient) 
explains the surprising requirement that the law of $(M_t)_{t\ge0}$ is 
invariant by reflections at times $T_{a_n}$ and $T_{2a_n}$ in Theorem 1 
of~\cite{Chaumont - Vostrikova}. Chaumont and Vostrikova ask whether the 
assumption on $T_{2a_n}$ can be removed. Their study of the discrete 
case could lead to believe that it cannot. Yet, we give in this paper 
a positive answer to this question. Here is our main result. 

\begin{theorem}~\label{main result}
Let $M$ be any continuous stochastic process such that $M_0=0$. 
If there exists a sequence $(a_n)_{n \ge 1}$ of positive real 
numbers converging to $0$ 
such that the law of $M$ is invariant by the reflections at 
times $T_0=0$ and $T_{a_n}$, then $M$ is an Ocone local 
martingale. Moreover, if $T_{a_1} \circ M$ is almost surely finite,
then $M$ is almost surely divergent.
\end{theorem}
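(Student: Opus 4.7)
The plan is to split the proof into two parts: first showing $M$ is a continuous local martingale, then upgrading to the Ocone property. The main technical work lies in the first part, and I would approach it via the intermediate near-martingale bound $|\eef[M_{S \circ M}]| \le a+b$ announced in the abstract.

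I would begin by proving the intermediate lemma: if $M$ is invariant under $\rho_{T_0}$ and under the reflection at $\tau = T_{\{-a,b\}}$ with $a/(a+b)$ non-dyadic, then $|\eef[M_{S \circ M}]| \le a+b$ for every path-space stopping time $S$ with $M_{\cdot \wedge S \circ M}$ uniformly bounded. The strategy is to combine the invariance under $\rho_\tau$ with the identity $M_\tau \in \{-a, b\}$ to express $\eef[M_S]$ in terms of itself applied to reflected stopped paths, plus an error of size $\max(a, b)$. Iterating by alternating $\rho_\tau$ with $\rho_{T_0}$ produces a recursive relation whose coefficients are $p = a/(a+b)$ and $1-p = b/(a+b)$. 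The non-dyadic hypothesis on $p$ prevents a degenerate algebraic cancellation between the branches of the recursion, and summing the resulting geometric-type series leaves only the bound $a+b$.

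Next I would derive the hypothesis of the intermediate lemma from the hypothesis of Theorem~\ref{main result}. Invariance at $T_0$ gives $-M \stackrel{d}{=} M$, and conjugating $\rho_{T_{a_n}}$ by the sign flip yields $\rho_{T_{-a_n}}(M) \stackrel{d}{=} M$. Reflection at $T_{\{-a_m, a_n\}}$ can then be realised as $\rho_{T_{-a_m}}$ on $\{T_{-a_m} < T_{a_n}\}$ and as $\rho_{T_{a_n}}$ on its complement --- each event is measurable before its associated reflection, so the composite preserves the law of $M$. For the non-dyadic ratio, among pairs $(a_m, a_n)$ with $a_m + a_n$ arbitrarily small, at least one gives $a_m/(a_m + a_n)$ non-dyadic, since the bad ratios form a countable set. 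Applying the intermediate lemma and letting $a + b \to 0$ forces $\eef[M_{S \circ M}] = 0$ for every bounded-stopped $S$, i.e.\ $M$ is a continuous local martingale.

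For the second part, I would apply Dambis--Dubins--Schwarz to write $M_t = \beta_{\langle M \rangle_t}$ on a suitable enlargement, with $\beta$ a Brownian motion. The reflection invariances of $M$ at $T_{a_n}$ translate into joint invariances of $(\beta, \langle M \rangle)$ under the corresponding first-passage reflections of $\beta$, and letting $a_n \to 0$ together with Ocone's Theorem~A yields the independence of $\beta$ from $\langle M \rangle$, i.e.\ the Ocone property. The divergence statement follows: if $T_{a_1} \circ M$ is a.s.\ finite and $\beta \perp \langle M \rangle$, then $\langle M \rangle_\infty = \infty$ a.s.\ (otherwise $T_{a_1} \circ M$ would be infinite with positive probability), and the Ocone representation with recurrent $\beta$ forces $M$ to oscillate between $\pm\infty$. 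The main obstacle will be the intermediate lemma itself --- organising the iterated reflections into a convergent recursion and pinning down precisely how the non-dyadic condition on $a/(a+b)$ breaks the symmetry that would otherwise trivialise the bound.
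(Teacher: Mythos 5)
There are two genuine gaps. First, the heart of the matter --- the intermediate bound $|\eef[M_{S\circ M}]|\le a+b$ --- is described by a mechanism that does not work as stated. You propose to ``iterate by alternating $\rho_\tau$ with $\rho_{T_0}$'' and to sum a ``geometric-type series'' with coefficients $p=a/(a+b)$ and $1-p$; but no such recursion for $\eef[M_S]$ closes up, because a single application of $\rho_\tau$ does not relate $\eef[M_S]$ to itself plus a controllable error. What is actually needed is a concrete construction: a sequence of stopping times $\tau_0=0<\tau_1<\tau_2<\cdots$ whose successive increments $|c_n-c_{n-1}|$ are dictated by the orbit of $0$ under the doubling map on $[-a,b]$ (conjugate to $x\mapsto 2x \bmod 1$), together with a proof that \emph{each} $\tau_n$ is a law-preserving reflection time. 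The latter is the real work: on each atom of the sign sequence $(\ep_1,\dots,\ep_n)$ of increments, $\tau_n$ must be identified with $T\circ\gamma^{M}$ for a suitable iterate of $\gamma=\rho_T\circ\rho_0$, via an explicit binary-digit formula for the action of $\gamma$ on sign sequences; one then invokes stability of the family of law-preserving reflection times under composition with reflections and under optional mixtures. Only then does $(X_{\tau_n})$ become a martingale with increments bounded by $(a+b)/2$, and optional stopping gives the bound. The non-dyadic hypothesis enters not to ``prevent algebraic cancellation'' but to guarantee that the doubling-map orbit of $0$ never dies (it never hits the midpoint where the map is undefined), so that the $\tau_n$ increase to $+\infty$ and the increment sizes do not tend to $0$. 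None of this is recoverable from your sketch.

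Second, your selection of a pair with non-dyadic ratio is unjustified: ``the bad ratios form a countable set'' proves nothing, since the set of available pairs $(a_m,a_n)$ from the \emph{given} sequence is also countable, and a priori every ratio $a_m/(a_m+a_n)$ could be dyadic. One needs an arithmetic lemma: for any $c>b>a>0$, at least one of $a/(a+b)$, $b/(b+c)$, $a/(a+c)$ is non-dyadic (proved by writing the three ratios as $i/2^p$, $j/2^q$, $k/2^r$ with $i,j,k$ odd and deriving the parity contradiction $2^rij+2^qik+2^pjk-2^{p+q}k=2ijk$). Applied to triples from $(a_n)$, this yields admissible pairs with $a_m+a_n$ arbitrarily small. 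A smaller remark: your final step invokes Ocone's Theorem~A (reflections at \emph{fixed} times) for the DDS Brownian motion, but you give no argument converting invariance under first-passage reflections of $M$ into invariance under fixed-time reflections of $\beta$; the cleaner route is to condition on $\langle M\rangle=f$, observe that the conditional law inherits the reflection invariances, rerun the local-martingale argument under that conditional law, and conclude that the DDS Brownian motion is obtained by a deterministic time change, hence independent of $\langle M\rangle$.
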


We provide a simpler proof of this stronger statement (the final steps 
in the approximation method of~\cite{Chaumont - Vostrikova} were rather 
technical). Let us now indicate the steps of the proof and the plan of 
the paper. 

Our proof first uses some stability properties of the set of all 
stopping times $T$ such that $\rho_T$ preserves the law of $M$.
These properties are established in section~\ref{Stability properties}. 

In section~\ref{Reflections}, we show that for any positive real numbers 
$a$ and $b$ such that $a/(a+b)$ is not dyadic, if the reflections $\rho_0$ and 
$\rho_{T_{-a} \wedge T_b}$ preserve the law of $M$, then $M$ is close 
to a local martingale in the following sense: for every stopping time 
$S$ in the canonical filtration of $\wwf$ such that 
the stopped process $M_{\cdot \wedge (S \circ M)}$ is uniformly bounded, 
$|\eef[M_{S \circ M}]| \le a+b$. To prove this, we build a nondecreasing 
sequence $(\tau_n)_{n \ge 0}$ of stopping times, increasing while finite 
($\tau_n < \tau_{n+1}$ if $\tau_n < +\infty$), 
starting with $\tau_0=0$, such that the reflections $\rho_{\tau_n}$ 
preserve the law of $M$ and such that the increments of $M$ on each interval 
$[\tau_n,\tau_{n+1}]$ are bounded by $a+b$. 

The proof that the reflections $\rho_{\tau_n}$ actually preserve the 
law of $M$ is given in section~\ref{Key fact}. 
The final step of the proof of theorem~\ref{main result} is in 
section~\ref{Proof of the main theorem}. 

To prove these results, it is more convenient to work in the canonical 
space. 
From now on, $\WW$ denotes the $\sigma$-field on $\wwf$ generated by the 
canonical projections, $X = (X_t)_{t \ge 0}$ the coordinate process on 
$(\wwf,\WW)$, and $\FF^0$ its natural filtration of the space $\wwf$ 
(without any completion). Moreover, $Q$ denotes the law of $M$ and $\eef_Q$ 
is the expectation whith respect to $Q$.

\section{Stability properties}
\label{Stability properties}

Call $\TT_Q$ the set of all stopping times $T$ of the filtration $\FF^0$ 
such that the reflection $\rho_T$ preserves $Q$. 
In this section, we establish some stability properties of $\TT_Q$. 
Let us begin with a preliminary lemma. 

\begin{lemma}~\label{Galmarino}
Let $S$ and $T$ be $\FF^0$-stopping times. If $w_1,w_2 \in \wwf$ 
coincide on $[0,T(w_1) \wedge T(w_2)]$, then
\begin{itemize} 
\item $T(w_1) = T(w_2)$;
\item either $S(w_1) = S(w_2)$ or $S(w_1) \wedge S(w_2) > T(w_1) = T(w_2)$.
\end{itemize}
Thus, the random times $S$ and $T$ are in the same order on $w_1$ as on $w_2$. 
\end{lemma}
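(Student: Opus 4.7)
The plan is to invoke the classical Galmarino test for the canonical filtration: since $\FF^0$ is the raw filtration generated by the coordinate process, a random time $U:\wwf \to [0,\infty]$ is an $\FF^0$-stopping time if and only if, for every $t \ge 0$ and every pair $w,w'$ of elements of $\wwf$ that coincide on $[0,t]$, the indicator $\one_{\{U \le t\}}$ takes the same value at $w$ and $w'$. This is just a reformulation of the fact that $\{U \le t\}$ must lie in $\FF^0_t = \sigma(X_s : s \le t)$, and it is the only ingredient I will need.

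First I would establish the equality $T(w_1) = T(w_2)$. By symmetry of the hypothesis and the conclusion, assume $T(w_1) \le T(w_2)$; then $T(w_1) \wedge T(w_2) = T(w_1)$, and the hypothesis says that $w_1$ and $w_2$ agree on $[0,T(w_1)]$. Applying the Galmarino test to $T$ at level $t := T(w_1)$ forces $\one_{\{T \le t\}}(w_2) = \one_{\{T \le t\}}(w_1) = 1$, whence $T(w_2) \le T(w_1)$ and equality follows. The degenerate case $T(w_1) = T(w_2) = +\infty$ is trivial.

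For the dichotomy on $S$, set $t := T(w_1) = T(w_2)$, so that $w_1$ and $w_2$ still agree on $[0,t]$. Applying the Galmarino test to $S$ at level $t$ shows that $S(w_1) \le t$ and $S(w_2) \le t$ occur together or not at all. If neither holds, then $S(w_1) \wedge S(w_2) > t$, which is the second alternative in the statement. Otherwise both hold, and one can set $s := S(w_1) \le t$; since $w_1$ and $w_2$ still coincide on $[0,s] \subset [0,t]$, a second application of Galmarino to $S$, this time at level $s$, gives $S(w_2) \le s = S(w_1)$. Exchanging the roles of $w_1$ and $w_2$ (using $S(w_2) \le t$) yields the reverse inequality, and one concludes $S(w_1) = S(w_2)$.

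The closing statement on orderings is then immediate: if, say, $S(w_1) < T(w_1)$, then $S(w_1) < T(w_1) = T(w_2)$ rules out the second alternative, so the first forces $S(w_2) = S(w_1) < T(w_2)$; the cases $S(w_1) = T(w_1)$ and $S(w_1) > T(w_1)$ are treated symmetrically. There is no genuine obstacle here: the whole argument is a textbook Galmarino-test manipulation, with only the minor care of handling the infinite-stopping-time cases separately, which are trivial.
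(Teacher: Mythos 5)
Your argument is correct and follows essentially the same route as the paper: both points are straightforward applications of Galmarino's test for the raw canonical filtration, first to $T$ and then to $S$ on the initial segment where $w_1$ and $w_2$ are known to agree. The paper's proof is just a terser version of the same manipulation, so there is nothing further to add.
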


\begin{proof}
The first point is an application of Galmarino's test 
(see~\cite{Revuz - Yor}, chapter~I, exercise~4.21). 
The second follows by the same argument, since the inequality 
$S(w_1) \wedge S(w_2) \le T(w_1) = T(w_2)$ would imply that 
$w_1,w_2 \in \wwf$ coincide on $[0,S(w_1) \wedge S(w_2)]$. 
\end{proof}

\begin{corollary}~\label{same order}
Let $T$ be an $\FF^0$-stopping time. Then 
\begin{enumerate}
\item $T \circ \rho_T = T$
\item $\rho_T$ is an involution.
\item for every $A \in \FF^0_T$, $\rho_T^{-1}(A) = A$. In particular, if $S$ 
is another stopping time, the events $\{S<T\}$, $\{S=T\}$ and $\{S>T\}$ 
are invariant by $\rho_T$. 
\end{enumerate}
\end{corollary}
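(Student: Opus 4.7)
My plan is to let everything follow from Lemma~\ref{Galmarino} applied to the pair $(w_1, w_2) = (w, \rho_T(w))$, combined with the defining observation that $w$ and $\rho_T(w)$ always coincide on the interval $[0, T(w)]$.

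For statement (1), since $w$ and $\rho_T(w)$ agree on $[0, T(w)]$, they a fortiori agree on $[0, T(w) \wedge T(\rho_T(w))]$, and the first bullet of the lemma immediately gives $T(w) = T(\rho_T(w))$. For statement (2), once (1) is in hand I would set $r := T(w) = T(\rho_T(w))$ and unfold the definition of $\rho_T$ twice: for $t \le r$ the two iterations leave $w(t)$ untouched, while for $t \ge r$ the one-line computation $2\rho_T(w)(r) - \rho_T(w)(t) = 2w(r) - (2w(r) - w(t)) = w(t)$ closes the case. This is the only honest calculation and it is routine.

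For the first part of (3), I would invoke the Galmarino-test characterisation of $\FF^0_T$: a set $A$ belongs to $\FF^0_T$ precisely when, for any $w \in A$, every $w' \in \wwf$ coinciding with $w$ on $[0, T(w)]$ and satisfying $T(w') = T(w)$ also lies in $A$. Applying this to $w' = \rho_T(w)$, with (1) providing the equality $T(\rho_T(w)) = T(w)$, yields $\rho_T^{-1}(A) = A$; here (2) makes $\rho_T$ a bijection of $\wwf$, so the reverse inclusion is free. For the events $\{S < T\}$, $\{S = T\}$, $\{S > T\}$, which are not in general $\FF^0_T$-measurable, I would instead appeal to the second bullet of Lemma~\ref{Galmarino}, which says precisely that on two paths coinciding up to $T(w_1) \wedge T(w_2)$ the random times $S$ and $T$ are ordered in the same way; applied to $(w, \rho_T(w))$ this delivers the $\rho_T$-invariance of the three comparison events.

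I do not anticipate any genuine obstacle; the only point worth flagging is the distinction between the use of the first bullet of Lemma~\ref{Galmarino} (for (1) and for the $\FF^0_T$ part of (3)) and of the second bullet (for the invariance of $\{S<T\}$, $\{S=T\}$, $\{S>T\}$), which is presumably the reason for stating the second bullet separately in the lemma.
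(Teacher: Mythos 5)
Your proof is correct and follows essentially the same route as the paper: Lemma~\ref{Galmarino} applied to the pair $(w,\rho_T(w))$, a direct two-line computation for the involution, and Galmarino's test for the $\FF^0_T$-invariance. One small correction: the events $\{S<T\}$, $\{S=T\}$ and $\{S>T\}$ \emph{do} belong to $\FF^0_S\cap\FF^0_T$ (a standard fact that the paper itself invokes in the proof of Corollary~\ref{min and max}), so the ``in particular'' already follows from the first assertion of (3); your alternative derivation via the second bullet of Lemma~\ref{Galmarino} is nevertheless valid and equally direct.
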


\begin{proof}
The first point is a consequence of the application of the application of 
lemma~\ref{Galmarino} to the paths $w$ and $\rho_T(w)$. The secund point 
follows. The third point is another application of Galmarino's test 
(see~\cite{Revuz - Yor}, chapter~I, exercise~4.21) since $w$ and $\rho_T(w)$ 
coincide on $[0,T(w)]$. 
\end{proof}

The next lemma states that $\TT_Q$ is stable by the optional mixtures.

\begin{lemma}~\label{mixture} 
Let $(S_n)$ be a (finite or infinite) sequence of $\FF^0$-stopping times 
and $(A_n)$ a measurable partition of $(\wwf,\WW)$ such that 
$A_n \in \FF_{S_n}$ for every $n$. Then 
$$T := \sum_n S_n \one_{A_n}$$
is an $\FF^0$-stopping time. If $S_n \in \TT_Q$ for every $n$, then 
$T \in \TT_Q$.
\end{lemma}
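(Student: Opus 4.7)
The plan has two parts, matching the two assertions of the lemma.

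The first part is to check that $T$ is an $\FF^0$-stopping time. For every $t \ge 0$, one writes
\[
\{T \le t\} = \bigcup_n \bigl(A_n \cap \{S_n \le t\}\bigr),
\]
and since $A_n \in \FF^0_{S_n}$, each set $A_n \cap \{S_n \le t\}$ lies in $\FF^0_t$ by the very definition of $\FF^0_{S_n}$. The (at most countable) union is still in $\FF^0_t$, so $T$ is a stopping time.

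The second part is the reflection invariance. The key observation is that on $A_n$ one has $T = S_n$, hence $\rho_T(w) = \rho_{S_n}(w)$ for every $w \in A_n$. The other ingredient is the third item of Corollary~\ref{same order} applied to $S_n$: since $A_n \in \FF^0_{S_n}$, we have $\rho_{S_n}^{-1}(A_n) = A_n$, i.e.\ $\one_{A_n} \circ \rho_{S_n} = \one_{A_n}$. Now, for any bounded $\WW$-measurable function $f$, split
\[
\eef_Q[f \circ \rho_T] = \sum_n \eef_Q\bigl[(f \circ \rho_T)\,\one_{A_n}\bigr]
 = \sum_n \eef_Q\bigl[(f \circ \rho_{S_n})\,\one_{A_n}\bigr],
\]
(the interchange of sum and expectation is justified by dominated convergence, using $\sum_n \one_{A_n} = 1$ and the boundedness of $f$). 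Using $\one_{A_n} = \one_{A_n} \circ \rho_{S_n}$, the $n$th term equals $\eef_Q[(f\,\one_{A_n}) \circ \rho_{S_n}]$, and because $S_n \in \TT_Q$, this in turn equals $\eef_Q[f\,\one_{A_n}]$. Summing back gives $\eef_Q[f \circ \rho_T] = \eef_Q[f]$, which is exactly the statement $\rho_T \in \TT_Q$.

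There is no real obstacle: the proof is essentially bookkeeping, the one subtlety being the use of $\one_{A_n} = \one_{A_n} \circ \rho_{S_n}$ so that on $A_n$ the substitution $\rho_T \rightsquigarrow \rho_{S_n}$ can actually be performed inside the expectation. If I wanted to be pedantic I would first verify the identity for indicator functions $f = \one_B$ with $B \in \WW$ (so that everything reduces to $Q(\rho_T^{-1}B) = Q(B)$) and then extend by monotone class, but the direct computation above is already clean enough.
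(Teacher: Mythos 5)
Your proof is correct and follows essentially the same route as the paper's: verify $\{T\le t\}=\bigcup_n(A_n\cap\{S_n\le t\})\in\FF_t$, then compute $\eef_Q[f\circ\rho_T]=\sum_n\eef_Q[(f\one_{A_n})\circ\rho_{S_n}]=\eef_Q[f]$ using the invariance of $A_n$ under $\rho_{S_n}$ (Corollary~\ref{same order}) and the invariance of $Q$ under $\rho_{S_n}$. You merely make explicit two points the paper leaves implicit (the identity $\one_{A_n}\circ\rho_{S_n}=\one_{A_n}$ and the justification for interchanging sum and expectation), which is fine.
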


\begin{proof}
Note that $T$ is an $\FF^0$-stopping time since for every $t \in \rrf_+$, 
\begin{eqnarray*}
\{T \le t\} = \bigcup_n (A_n \cap \{S_n \le t\}) \in \FF_t.
\end{eqnarray*}
Fix any bounded measurable function $\phi$ from $\wwf$ to $\rrf$. 
Since for each $n$, the event $A_n$ and the probability $Q$ are invariant 
by $\rho_{S_n}$, one has
\begin{eqnarray*}
\eef_Q[\phi \circ \rho_T] 
&=& \sum_n \eef_Q[(\phi \circ \rho_{S_n})\one_{A_n}] \\
&=& \sum_n \eef_Q[(\phi\one_{A_n}) \circ \rho_{S_n}] \\
&=& \sum_n \eef_Q[\phi\one_{A_n}] \\
&=& \eef_Q[\phi].
\end{eqnarray*}
Hence $\rho_T$ preserves $Q$. 

\end{proof}
 
\begin{corollary}~\label{min and max}
For every $S$ and $T$ in $\TT_Q$, $S \wedge T$ and $S \vee T$ are in $\TT_Q$.
\end{corollary}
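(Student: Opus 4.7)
The plan is to exhibit both $S \wedge T$ and $S \vee T$ as two-term optional mixtures of $S$ and $T$, so that Lemma~\ref{mixture} applies directly.

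For the infimum I would write
$$S \wedge T \;=\; S\,\one_{\{S \le T\}} \;+\; T\,\one_{\{S > T\}},$$
which identifies $S \wedge T$ with the mixture built from the two stopping times $S_1 = S$, $S_2 = T$ and the partition $A_1 = \{S \le T\}$, $A_2 = \{S > T\}$. For the supremum I would symmetrically write
$$S \vee T \;=\; S\,\one_{\{S \ge T\}} \;+\; T\,\one_{\{S < T\}}.$$

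To invoke Lemma~\ref{mixture} I need to check that $A_1 \in \FF^0_{S_1}$ and $A_2 \in \FF^0_{S_2}$ in each case; that is, $\{S \le T\} \in \FF^0_S$ and $\{S > T\} \in \FF^0_T$ for the infimum, and $\{S \ge T\} \in \FF^0_S$, $\{S < T\} \in \FF^0_T$ for the supremum. This is the classical fact, valid for any filtration (and hence for the uncompleted $\FF^0$), that the events $\{S < T\}$, $\{S \le T\}$, $\{S = T\}$ all lie in $\FF^0_{S \wedge T}$, and therefore in particular in both $\FF^0_S$ and $\FF^0_T$; I would simply cite this as standard material (e.g. Revuz--Yor, chapter~I).

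Once the measurability is in place, Lemma~\ref{mixture} immediately gives $S \wedge T \in \TT_Q$ and $S \vee T \in \TT_Q$. There is no real obstacle here: the whole content of the corollary is packaged inside the mixture lemma, and the only step that requires any justification is the standard measurability of the comparison events between two stopping times.
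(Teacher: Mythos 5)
Your proof is correct and is essentially the paper's own argument: the paper also exhibits $S\wedge T$ and $S\vee T$ as optional mixtures over the comparison events (it uses the three-set partition $\{S<T\}$, $\{S=T\}$, $\{S>T\}$, which lies in $\FF_S\cap\FF_T$, while you merge $\{S=T\}$ into one side — an immaterial difference) and then invokes Lemma~\ref{mixture}.
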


\begin{proof}
As the events $\{S<T\}$, $\{S=T\}$ and $\{S>T\}$ belong to 
$\FF_S \cap \FF_T$, the result is a direct application of lemma~\ref{mixture}.
\end{proof}

The following lemmas will be used to prove a subtler result: if 
$S$ and $T$ are in $\TT_Q$, then $S \circ \rho_T$ is  in $\TT_Q$. 

\begin{lemma}~\label{preservation of the filtration}
Let $S$ and $T$ be $\FF^0$-stopping times. Then the following holds.
\begin{itemize} 
\item For every $t \ge 0$, $\rho_T^{-1}(\FF_t) = \FF_t$.
\item $S \circ \rho_T$ is an $\FF^0$-stopping time.
\end{itemize}
\end{lemma}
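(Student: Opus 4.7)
The plan is to deduce both claims directly from Galmarino's test, which characterises $\FF^0_t$ as the set of events $A \subset \wwf$ such that $\one_A(w_1) = \one_A(w_2)$ whenever $w_1$ and $w_2$ coincide on $[0,t]$.

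For the first point, I would fix $t \ge 0$ and show that for any $A \in \FF^0_t$, the preimage $\rho_T^{-1}(A)$ is again in $\FF^0_t$. By Galmarino's test, it suffices to show that if $w_1, w_2 \in \wwf$ coincide on $[0,t]$, then $\rho_T(w_1)$ and $\rho_T(w_2)$ coincide on $[0,t]$ as well. I would split into two cases. In the first case, $T(w_1) \ge t$: applying Galmarino's test to the stopping time $T$ then yields $T(w_2) \ge t$, and by definition $\rho_T(w_i) = w_i$ on $[0,t]$, so they agree. In the second case, $T(w_1) < t$: Galmarino's test gives $T(w_2) = T(w_1) < t$, and for $s \in [T(w_1), t]$ one has $\rho_T(w_i)(s) = 2 w_i(T(w_i)) - w_i(s)$; since $w_1 = w_2$ on $[0,t]$ (which contains $T(w_i)$ and $s$), this is the same value for $i=1,2$. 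This establishes $\rho_T^{-1}(\FF^0_t) \subset \FF^0_t$; the reverse inclusion follows by applying the same argument to the inverse map, which by Corollary~\ref{same order} is $\rho_T$ itself.

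For the second point, I would simply observe the identity
\[
\{S \circ \rho_T \le t\} = \rho_T^{-1}\bigl(\{S \le t\}\bigr),
\]
which, since $\{S \le t\} \in \FF^0_t$ and the first point gives $\rho_T^{-1}(\FF^0_t) = \FF^0_t$, lies in $\FF^0_t$. Hence $S \circ \rho_T$ is an $\FF^0$-stopping time.

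The argument is essentially a bookkeeping exercise; the only subtlety, and arguably the main thing to be careful about, is the case analysis in the first point: one must invoke the stopping time property of $T$ correctly so that the reflection level $w_i(T(w_i))$ agrees for $i=1,2$, which is what allows $\rho_T(w_1)$ and $\rho_T(w_2)$ to be identified on $[0,t]$ past the reflection time.
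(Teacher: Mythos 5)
Your proof is correct, and for the first bullet it takes a genuinely different route from the paper. The paper argues on generators: it notes that $\rho_T^{-1}(\FF_t)$ is generated by the random variables $X_s\circ\rho_T$, $s\in[0,t]$, writes the explicit formula $X_s \circ \rho_T = (2X_T-X_s)\one_{[T \le s]} + X_s\one_{[T > s]}$, and reads off $\FF_t$-measurability; you instead argue pathwise, showing via a two-case analysis that $\rho_T$ maps the equivalence classes ``agreement on $[0,t]$'' into themselves and then invoking Galmarino's test. Both use the involution property of $\rho_T$ for the reverse inclusion, and your second bullet is identical to the paper's. Your case analysis is sound (in the case $T(w_1)<t$, Lemma~\ref{Galmarino} does give $T(w_2)=T(w_1)$ and hence equality of the reflection levels). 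The one point you should make explicit is measurability: Galmarino's test characterises $\FF^0_t$ among sets of $\WW$, so to conclude $\rho_T^{-1}(A)\in\FF^0_t$ you must first know $\rho_T^{-1}(A)\in\WW$, i.e.\ that $\rho_T$ is $\WW$-measurable --- and the cleanest way to see that is precisely the displayed formula for $X_s\circ\rho_T$ that the paper uses. So the paper's computation buys you measurability and the filtration statement in one stroke, whereas your argument is perhaps more transparent about \emph{why} $\rho_T$ respects $\FF^0_t$ but needs that formula (or an equivalent observation) as a preliminary.
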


\begin{proof}
Fix $t \ge 0$. Then $\rho_T^{-1}(\FF_t)$ is the $\sigma$-field 
generated by the random variables $X_s \circ \rho_T$ for $s \in [0,t]$, and 
the equality
$$X_s \circ \rho_T = (2X_T-X_s)\one_{[T \le s]} + X_s\one_{[T > s]}$$
shows that these random variables are measurable for $\FF_t$.
Thus $\rho_T^{-1}(\FF_t) \subset \FF_t$. Since $\rho_T$ is an involution, 
the reverse inclusion follows, which proves the first statement.

For each $t \ge 0$, 
$\{S \circ \rho_T \le t\} = \rho_T^{-1}(\{S \le t\}) \in \FF_t$,
which proves the second statement.
\end{proof}

\begin{lemma}~\label{formulas}
Let $S$ and $T$ be $\FF^0$-stopping times and $w \in \wwf$.\\
If $S(w) \le T(w)$, then $S(\rho_T (w)) = S(w)$ and 
$\rho_{S \circ \rho_T} (w) = \rho_S(w)$.\\
If $S(w) \ge T(w)$, then $T(\rho_S(\rho_T (w))) = T(w)$ 
and $\rho_{S \circ \rho_T} (w) = \rho_T(\rho_S(\rho_T (w)))$.
\end{lemma}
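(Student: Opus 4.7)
The plan is to handle the two cases in turn, using Galmarino's test (Lemma~\ref{Galmarino}) for the identities involving $S$ and $T$ evaluated on reflected paths, and then deducing the formulas for $\rho_{S\circ\rho_T}(w)$ by going back to the definition $\rho_R(w) = \rho_{R(w)}(w)$, where on the right-hand side $\rho_{R(w)}$ is the deterministic reflection at the fixed time $R(w)$.

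\textbf{Case $S(w)\le T(w)$.} Since $w$ and $\rho_T(w)$ coincide on $[0,T(w)]$ and $S(w)\le T(w)$, they coincide on $[0,S(w)]$. Galmarino's test then gives $S(\rho_T(w))=S(w)$. Consequently $(S\circ\rho_T)(w)=S(w)$, and plugging this into the definition yields
\[
\rho_{S\circ\rho_T}(w) = \rho_{(S\circ\rho_T)(w)}(w) = \rho_{S(w)}(w) = \rho_S(w).
\]

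\textbf{Case $S(w)\ge T(w)$.} Set $w' := \rho_T(w)$. By Corollary~\ref{same order}, $T(w')=T(w)$. From Lemma~\ref{Galmarino} applied to $w$ and $w'$ (which coincide on $[0,T(w)]$), the times $S$ and $T$ are in the same order on $w$ as on $w'$, so $S(w')\ge T(w')=T(w)$. Now $\rho_S(w')$ and $w'$ coincide on $[0,S(w')]$, which contains $[0,T(w')]$; a second application of Galmarino's test gives $T(\rho_S(w'))=T(w')=T(w)$, proving the first identity.

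For the second identity, it suffices to compute $\rho_T(\rho_S(w'))$ piecewise, setting $t:=T(w)$ and $s:=S(w')$ and noting $s\ge t$. On $[0,t]$ the path $w'$ agrees with $w$; on $[t,s]$ one has $w'(u)=2w(t)-w(u)$; on $[s,\infty)$ the same formula still holds. Reflecting at time $s$ one computes $\rho_S(w')$ explicitly on each of the three intervals (on $[s,\infty)$ one gets $2w(t)-2w(s)+w(u)$), and since $(\rho_S(w'))(t)=w(t)$ one may reflect once more at time $T(\rho_S(w'))=t$ and observe that the middle interval is restored to $w(u)$ while the last becomes $2w(s)-w(u)$. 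The resulting path is precisely $\rho_{s}(w) = \rho_{(S\circ\rho_T)(w)}(w) = \rho_{S\circ\rho_T}(w)$.

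The only real subtlety is the bookkeeping in the triple composition of the second case: one must check that $S(w')\ge T(w')$ before invoking Galmarino a second time, and one must be vigilant that $\rho_S$ on the right-hand side denotes reflection at $S(\rho_T(w))$, not at $S(w)$. Both points are settled by Lemma~\ref{Galmarino} in the form ``$S$ and $T$ are in the same order on $w_1$ as on $w_2$'', so no further input is needed.
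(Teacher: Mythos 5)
Your proof is correct and takes essentially the same route as the paper: the same case split, the same use of Lemma~\ref{Galmarino} and Corollary~\ref{same order} to get $S(\rho_T(w))=S(w)$ in the first case and $T(\rho_S(\rho_T(w)))=T(w)$ in the second. The only cosmetic difference is that you verify the final identity by an explicit piecewise computation of path values, whereas the paper phrases the same cancellation in terms of flipping increments after $T(w)$, after $S(\rho_T(w))$, and again after $T(w)$.
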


\begin{proof}
If $S(w) \le T(w)$, then $w$ and $\rho_T (w)$ coincide on $[0,S(w)]$, 
thus $S(\rho_T (w)) = S(w)$ and $\rho_{S \circ \rho_T} (w) = \rho_S(w)$.

If $S(w) \ge T(w)$, then $S(\rho_T (w)) \ge T(\rho_T (w)) = T(w)$ by 
corollary~\ref{same order}, thus
$\rho_S(\rho_T (w))$, $\rho_T (w)$ and $w$ 
coincide on $[0,T(w)]$, thus $T(\rho_S(\rho_T (w))) = T(w)$. But, to get 
$\rho_T \circ \rho_S \circ \rho_T (w)$ from $w$, one must successively: 
\begin{itemize}
\item multiply by $-1$ the increments after $T(w)$; 
\item multiply by $-1$ the increments after $S(\rho_T (w))$;
\item multiply by $-1$ the increments after $T(\rho_S(\rho_T (w)))$.
\end{itemize}
Since $T(\rho_S(\rho_T (w))) = T(w)$, one gets
$\rho_{S \circ \rho_T} (w) = \rho_T \circ \rho_S \circ \rho_T (w)$.
\end{proof}

\begin{lemma}~\label{composition with reflections}
For every $S$ and $T$ in $\TT_Q$, $S \circ \rho_T$ belongs to $\TT_Q$.
\end{lemma}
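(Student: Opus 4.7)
The strategy is to prove that $\eef_Q[\phi \circ \rho_{S \circ \rho_T}] = \eef_Q[\phi]$ for every bounded measurable $\phi : \wwf \to \rrf$, the stopping-time property of $S \circ \rho_T$ being already granted by Lemma~\ref{preservation of the filtration}. I would split the expectation along the partition $\{S \le T\}, \{S > T\}$ and apply the pathwise formulas of Lemma~\ref{formulas}, which rewrite $\rho_{S \circ \rho_T}$ as $\rho_S$ on $\{S \le T\}$ and as $\rho_T \circ \rho_S \circ \rho_T$ on $\{S > T\}$. The aim on each piece is then to reduce the contribution to $\eef_Q[\phi\, \one_E]$, where $E$ is the corresponding event.

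On $\{S \le T\}$, I note that $S$ agrees with $U := S \wedge T$, so $\rho_S$ and $\rho_U$ coincide there; by Corollary~\ref{min and max} $U \in \TT_Q$, and a routine check on the definition of stopping-time $\sigma$-fields places $\{S \le T\}$ in $\FF^0_U$, hence by Corollary~\ref{same order} this event is $\rho_U$-invariant. Invariance of $Q$ under $\rho_U$ then yields
$$\eef_Q\bigl[(\phi \circ \rho_{S \circ \rho_T})\one_{\{S \le T\}}\bigr] = \eef_Q\bigl[(\phi\,\one_{\{S \le T\}}) \circ \rho_U\bigr] = \eef_Q\bigl[\phi\,\one_{\{S \le T\}}\bigr].$$

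On $\{S > T\}$, the same routine verification puts the event in $\FF^0_S \cap \FF^0_T$, so by Corollary~\ref{same order} it is invariant under both $\rho_S$ and $\rho_T$. I would then peel off the three reflections in succession: using invariance of $Q$ under $\rho_T$, then under $\rho_S$, then under $\rho_T$ once more, together with the involutive property of $\rho_T$ from Corollary~\ref{same order}, the integrand against $\one_{\{S > T\}}$ collapses in three steps from $\phi \circ \rho_T \circ \rho_S \circ \rho_T$ down to $\phi$, while the indicator stays fixed at each stage. Adding the two contributions gives $\eef_Q[\phi \circ \rho_{S \circ \rho_T}] = \eef_Q[\phi]$.

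I do not anticipate any serious obstacle beyond careful bookkeeping. Once Lemmas~\ref{Galmarino} and~\ref{formulas} and Corollaries~\ref{same order} and~\ref{min and max} are in hand, the proof is a mechanical computation; the only delicate point is tracking the indicator $\one_{\{S > T\}}$ through the three successive reflections in the second piece, and this is exactly why the event was placed simultaneously in $\FF^0_S$ and $\FF^0_T$.
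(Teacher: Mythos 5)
Your proof is correct and follows essentially the same route as the paper: split along $\{S\le T\}$ versus $\{S>T\}$, rewrite $\rho_{S\circ\rho_T}$ via Lemma~\ref{formulas}, and use the $\rho$-invariance of these events (Corollary~\ref{same order}) together with the invariance of $Q$ under $\rho_S$ and $\rho_T$. The detour through $U=S\wedge T$ on the first piece is harmless but unnecessary, since $\{S\le T\}$ is already $\rho_S$-invariant and $S\in\TT_Q$ by hypothesis, which is what the paper uses directly.
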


\begin{proof}
By lemma~\ref{formulas} and corollary~\ref{same order}, 
one has, for every $B \in \WW$, 
\begin{eqnarray*}
Q[\rho_{S \circ \rho_T}^{-1}(B)] 
&=& Q[\rho_{S \circ \rho_T}^{-1}(B)\ ;\ S \le T] 
+Q[\rho_{S \circ \rho_T}^{-1}(B)\ ;\ S > T]\\ 
&=& Q[\rho_{S}^{-1}(B \cap \{S \le T\})] 
+Q[(\rho_T \circ \rho_S \circ \rho_T)^{-1}(B \cap \{S > T\})]\\ 
&=& Q[B \cap \{S \le T\}] + Q[B \cap \{S > T\}] = Q[B].
\end{eqnarray*}
Thus $S \circ \rho_T$ belongs to $\TT_Q$.
\end{proof}

Here is a simple application of our last lemmas.

\begin{corollary}~\label{composition with the reflection at $0$}
For every $a \in \rrf$, $T_{-a} = T_a \circ \rho_0$ and 
$\rho_{T_{-a}} = \rho_0 \circ \rho_{T_a} \circ \rho_0$.
Thus, if $0\in \TT_Q$ and $T_a\in \TT_Q$, then $T_{-a} \in \TT_Q$.
\end{corollary}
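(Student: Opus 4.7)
The plan is first to observe that since every $w \in \wwf$ satisfies $w(0)=0$, the reflection $\rho_0$ acts as pointwise negation: $\rho_0(w)(t) = 2w(0)-w(t) = -w(t)$. Once this is recorded, the two stated identities reduce to direct computations on paths, after which the conclusion follows from the stability lemmas already proved.

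For the first identity $T_{-a} = T_a \circ \rho_0$, I would simply observe that for any $w \in \wwf$,
\[
T_{-a}(w) = \inf\{t \ge 0 : w(t) = -a\} = \inf\{t \ge 0 : -w(t) = a\} = T_a(\rho_0(w)).
\]

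For the second identity $\rho_{T_{-a}} = \rho_0 \circ \rho_{T_a} \circ \rho_0$, I would argue by cases. Fix $w$ and set $s := T_{-a}(w) = T_a(\rho_0(w))$. For $t \le s$, clearly $\rho_{T_{-a}}(w)(t) = w(t)$, while $\rho_{T_a}(\rho_0(w))(t) = \rho_0(w)(t) = -w(t)$, so applying $\rho_0$ again recovers $w(t)$. For $t \ge s$, one has $\rho_{T_{-a}}(w)(t) = -2a - w(t)$ on the one hand, and on the other hand $\rho_{T_a}(\rho_0(w))(t) = 2a - \rho_0(w)(t) = 2a + w(t)$, which becomes $-2a - w(t)$ after applying $\rho_0$. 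Both sides agree on every $t$, which proves the identity.

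Finally, for the last statement: assuming $0 \in \TT_Q$ and $T_a \in \TT_Q$, I would apply lemma~\ref{composition with reflections} with $S = T_a$ and $T = 0$ (noting that the constant time $0$ is trivially an $\FF^0$-stopping time) to conclude that $T_a \circ \rho_0 \in \TT_Q$, hence $T_{-a} \in \TT_Q$ by the first identity. Alternatively, the second identity gives the same conclusion directly, since a composition of $Q$-preserving maps preserves $Q$. There is no real obstacle here; the only point requiring care is the sign bookkeeping in the case analysis for the second identity.
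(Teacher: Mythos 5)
Your proof is correct and takes essentially the same route as the paper: the first identity is the same direct observation, and the final conclusion is obtained exactly as the paper suggests, by applying lemma~\ref{composition with reflections} with $S=T_a$ and $T=0$ (or directly from the second identity). The only cosmetic difference is that you verify $\rho_{T_{-a}}=\rho_0\circ\rho_{T_a}\circ\rho_0$ by an explicit case-by-case computation, whereas the paper cites it as the special case $S=T_a$, $T=0$ of lemma~\ref{formulas}; your sign bookkeeping is accurate, and your $t\le s$ case also covers the situation where the hitting time is infinite.
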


\begin{proof}
The first equality is obvious and 
the second equality follows from lemma~\ref{formulas}. 
One can deduce the last point either from the first equality by
lemma~\ref{composition with reflections} or directly 
from the second equality.
\end{proof}

\section{Reflections at $0$ and at the hitting time of $\{-a,b\}$}
\label{Reflections}

We keep the notations of the previous section and we fix two positive real 
numbers $a,b$ such that $a/(a+b)$ is not dyadic. 
Note that $T = T_{-a} \wedge T_b$ is the hitting time of $\{-a,b\}$. 
This section is devoted to the proof of the following result. 

\begin{proposition}~\label{bound}
Let $Q$ be a probabiliy measure on $(\wwf,\WW)$.
If $0\in\TT_Q$ and $T\in\TT_Q$, 
then, for every finite stopping time $S$ in the canonical filtration of $\wwf$ 
such that the stopped process $X_{\cdot \wedge S}$ is uniformly bounded, one has
$$|\eef_Q[X_S]| \le a+b.$$
\end{proposition}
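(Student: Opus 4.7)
The plan, as sketched in the introduction, is to construct a nondecreasing sequence of $\FF^0$-stopping times $0=\tau_0\le\tau_1\le\cdots$, strictly increasing whenever finite, such that each $\rho_{\tau_n}$ preserves $Q$ and the oscillation $\sup_{t\in[\tau_n,\tau_{n+1}]}|X_t-X_{\tau_n}|$ is bounded by $a+b$. The bound $|\eef_Q[X_S]|\le a+b$ then drops out as a limit, combining the invariance at each $\tau_n$ with the uniform oscillation bound. The base cases $\tau_0=0$ and $\tau_1=T$ are immediate: both are in $\TT_Q$ by hypothesis, $X$ is confined to $[-a,b]$ on $[0,T]$, so the oscillation on $[\tau_0,\tau_1]$ is at most $a\vee b\le a+b$.

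The inductive step from $\tau_n$ to $\tau_{n+1}$ is the core of the construction. The idea is that $\tau_{n+1}$ should play the role of $T$ relative to the shifted state $(\tau_n,X_{\tau_n})$; in a Markov setting this would follow from the strong Markov property, but here it has to be built by hand. Starting from $\rho_0$, $\rho_T$, $\rho_{\tau_n}\in\TT_Q$ one combines them using the stability lemmas of section~\ref{Stability properties}, namely lemma~\ref{mixture} (optional mixtures), corollary~\ref{min and max} (minima and maxima) and lemma~\ref{composition with reflections} (composition with reflections), together with the identities of lemma~\ref{formulas} and corollary~\ref{composition with the reflection at $0$}, to produce a $\tau_{n+1}$ such that $\rho_{\tau_{n+1}}\in\TT_Q$ and the oscillation on $[\tau_n,\tau_{n+1}]$ remains bounded by $a+b$. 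The non-dyadic hypothesis on $a/(a+b)$ enters precisely here, to prevent the orbit of the iterated reflections from collapsing onto a finite lattice and so guarantee that the iteration always produces a strictly later $\tau_{n+1}$. The actual verification that $\rho_{\tau_n}$ preserves $Q$ is the content of section~\ref{Key fact}, to be deferred.

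With the sequence in hand, the bound follows by a limit argument. The construction gives a positive lower bound on each jump $|X_{\tau_{n+1}}-X_{\tau_n}|$ (e.g.\ at least $a\wedge b$ when $\tau_{n+1}$ is a shifted analogue of $T$); by continuity of $X$ this forces $\tau_n\to\infty$ $Q$-a.s., so $X_{\tau_n\wedge S}\to X_S$ pointwise. Since $X_{\cdot\wedge S}$ is uniformly bounded, bounded convergence gives $\eef_Q[X_{\tau_n\wedge S}]\to\eef_Q[X_S]$. The invariance $\rho_{\tau_n}\in\TT_Q$ applied at a fixed time $t$ yields $\eef_Q[X_t\one_{t\ge\tau_n}]=\eef_Q[X_{\tau_n}\one_{t\ge\tau_n}]$, which, combined with the oscillation bound on $[\tau_n,\tau_{n+1}]$ and a careful telescoping using successive $\rho_{\tau_k}$, propagates the trivial bound $|\eef_Q[X_{\tau_1\wedge S}]|\le a\vee b$ to $|\eef_Q[X_{\tau_n\wedge S}]|\le a+b$ for every $n\ge 1$, hence to the same bound in the limit.

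The principal obstacle is squarely the construction step: producing at each stage a $\tau_{n+1}\in\TT_Q$ that is strictly later than $\tau_n$ and keeps the oscillation under $a+b$, using only the stability operations of section~\ref{Stability properties}. The non-dyadic condition is the essential arithmetic ingredient that makes this work, and the verification of $\rho_{\tau_n}\in\TT_Q$ is exactly what the following section~\ref{Key fact} is devoted to.
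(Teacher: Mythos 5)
Your proposal restates the strategy sketched in the introduction but leaves the two substantive parts of the argument either missing or incorrect. First, the sequence $(\tau_n)$ is never actually constructed. The paper builds it from the orbit $c_0=0$, $c_n=f(c_{n-1})$ of the doubling map $f$ on the non-dyadic points of $]-a,b[$ (conjugate to $x\mapsto 2x \bmod 1$), and sets $\tau_n$ to be the first time after $\tau_{n-1}$ at which $|X_t-X_{\tau_{n-1}}|=|c_n-c_{n-1}|$. The non-dyadic hypothesis is what makes $f$ iterable forever (it guarantees that $c_{n-1}$ never equals the midpoint $(b-a)/2$, where $f$ is undefined), rather than a device to keep the orbit off a finite lattice. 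In particular your base case $\tau_1=T$ does not match the construction that works: $\tau_1$ is the hitting time of $\{\pm\min(a,b)\}$, while $T$ is a \emph{random} member of the family, $T=\tau_{m\circ\ep}$ (lemma~\ref{identity}). Also, the increments $|c_n-c_{n-1}|$ are not bounded below by $a\wedge b$; one only knows they do not tend to $0$, which is what forces $\tau_n\to+\infty$ by continuity.

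Second, and more seriously, even granting proposition~\ref{key} (that $\tau_n\in\TT_Q$ for all $n$, which both you and the paper defer to the next section), your derivation of the bound does not go through. The identity $\eef_Q[X_t\one_{\{t\ge\tau_n\}}]=\eef_Q[X_{\tau_n}\one_{\{t\ge\tau_n\}}]$ is valid for a \emph{deterministic} $t$, but fails when $t$ is replaced by $S$: on the invariant event $\{S>\tau_n\}$ one has $X_S\circ\rho_{\tau_n}=2X_{\tau_n}-X_{S\circ\rho_{\tau_n}}$, and $S\circ\rho_{\tau_n}\ne S$ in general, so the proposed telescoping bound $|\eef_Q[X_{\tau_n\wedge S}]|\le a+b$ is not established. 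The paper circumvents exactly this difficulty by never evaluating $X$ at $S$ until the last line: it shows that $Y_n=X_{\tau_{D_n}}$ (the value at the last finite $\tau_k$, $k\le n$) is a martingale for $(\FF^0_{\tau_n})_{n\ge0}$ --- using that $Y_{n+1}-Y_n$ is antisymmetric under $\rho_{\tau_n}$ while $\FF^0_{\tau_n}$-events are invariant --- stops it at the discrete stopping time $N=\inf\{n\ge1:\tau_n\ge S\}$, notes that $(Y_{n\wedge N})$ is uniformly bounded by $C+(a+b)/2$ so that $\eef_Q[Y_N]=\eef_Q[Y_0]=0$, and concludes with the pathwise bound $|X_S-Y_N|\le a+b$. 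That discrete optional-stopping argument is the actual content of the proof of proposition~\ref{bound}, and it is absent from your write-up.
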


Note that the process $X$ may not be a local martingale.
The law of any process which stops when its absolute value hits $\min(a,b)$
fulfills the assumptions provided it is invariant by $T_0$. 
  
The requirement that $a/(a+b)$ is not dyadic may seem surprising, 
and one could think that it is just a technicality provided by the method
used to prove the result. In fact, proposition~\ref{bound} 
becomes false if this assumption is removed. A simple counterexample is 
given by the continuous stochastic process $(M_t)_{t \ge 0}$ defined by 
$$
M_t = \left\{\begin{array}{cl} t\xi &\text{ if } t \le 1,\\ \xi+(t-1)\eta & \text{ if } t > 1,\end{array}\right.
$$
where $\xi$ and $\eta$ are independent symmetric Bernoulli random variables.
Indeed, the law $Q$ of $M$ is invariant by reflections at times $0$ and 
$T_{-1} \wedge T_1$ since $T_{-1} \wedge T_1 = 1$ $Q$-almost surely. 
Yet, for every $c > 1$, 
the random variable $X_{T_{-2} \wedge T_c}$ is uniform on $\{-2,c\}$ and its 
expectation $(c-2)/2$ can be made as large as one wants.
 
The proof of proposition~\ref{bound} uses an increasing sequence of 
stopping times defined as follows. 
Call $D$ the set of $c \in ]-a,b[$ such that $(c+a)/(b+a)$ is not dyadic.
For every $x \in D$, set 
$$f(x)= \left\{\begin{array}{cl}  
2x+a &\text{ if } x < (b-a)/2,\\ 
2x-b &\text{ if } x > (b-a)/2. 
\end{array}\right.$$
This defines a map $f$ from $D$ to $D$. Conjugating $f$ by the affine 
map which sends $-a$ on $0$ and $b$ on $1$ gives the classical map 
$x \mapsto 2x \mod 1$ restricted to the non-dyadic elements of $]0,1[$.

By hypothesis, $0 \in D$, so one can define an infinite sequence 
$(c_n)_{n \ge 0}$ of elements of $D$ by $c_0=0$, and $c_n=f(c_{n-1})$ 
for $n \ge 1$. By definition, $c_{n-1}$ is the middle point
of the subinterval $[c_n,d_n]$ of $[-a,b]$, where 
$$
d_n= \left\{\begin{array}{cl} -a & \text{ if } c_{n-1} < (b-a)/2,\\
 b & \text{ if } c_{n-1} > (b-a)/2.\end{array}\right.
 $$
Note that $|c_n-c_{n-1}| = d(c_{n-1},\{-a,b\})$.

We define a sequence $(\tau_n)_{n \ge 0}$ of stopping times on $\wwf$ by 
setting $\tau_0=0$, and for every $n \ge 1$,
\begin{eqnarray*}
\tau_n(w) 
= \inf\{t \ge \tau_{n-1}(w) : |w(t)-w(\tau_{n-1}(w))| = |c_n-c_{n-1}|\}.
\end{eqnarray*}
Note that $c_n \ne c_{n-1}$ for every $n \ge 1$, hence the 
sequence $(\tau_n(w))_{n \ge 0}$ is increasing.
Moreover, since $(|c_n-c_{n-1}|)_{n \ge 1}$ does not converge to $0$, 
the continuity of $w$ forces the sequence $(\tau_n(w))_{n \ge 0}$
to be unbounded. By convention, we set $\tau_\infty=+\infty$.

Note that if $a=-1$ and $b=2$, then $c_n=0$ for every even $n$ and 
$c_n=1$ for every odd $n$ and the sequence $(\tau_n)_{n \ge 0}$ is 
similar to the sequences used in~\cite{Chaumont - Vostrikova}.

The proof of proposition~\ref{bound} relies on the following key statement.

\begin{proposition}~\label{key}
If $0\in\TT_Q$ and $T\in\TT_Q$, then
$\tau_n\in\TT_Q$ for every $n \ge 0$.
\end{proposition}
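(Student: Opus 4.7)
The plan is to argue by induction on $n$. The base case $\tau_0 = 0 \in \TT_Q$ is immediate from the hypothesis $0 \in \TT_Q$.

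For the inductive step, suppose $\tau_{n-1} \in \TT_Q$. Set $r_n := |c_n - c_{n-1}|$ and introduce the two one-sided first-passage times after $\tau_{n-1}$:
\[
U_n := \inf\{t \ge \tau_{n-1} : X_t - X_{\tau_{n-1}} = +r_n\}, \qquad V_n := \inf\{t \ge \tau_{n-1} : X_t - X_{\tau_{n-1}} = -r_n\},
\]
so that $\tau_n = U_n \wedge V_n$. A direct check at the path level, using that $\rho_{\tau_{n-1}}(w)(t) = 2 X_{\tau_{n-1}}(w) - w(t)$ for $t \ge \tau_{n-1}(w)$, shows that reflection at $\tau_{n-1}$ swaps the two sides: $U_n \circ \rho_{\tau_{n-1}} = V_n$. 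Consequently, once $V_n$ (equivalently $U_n$) is shown to lie in $\TT_Q$, Lemma~\ref{composition with reflections} yields the other, and Corollary~\ref{min and max} delivers $\tau_n = U_n \wedge V_n \in \TT_Q$.

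The crux of the inductive step is therefore to establish $V_n \in \TT_Q$. Here the self-similar, doubling-map structure of the sequence $(c_n)$ plays the decisive role. By construction $c_{n-1}$ is the midpoint of $[c_n, d_n]$ with $d_n \in \{-a, b\}$, so that a displacement of size $r_n$ from $c_{n-1}$ in one direction reaches the original boundary $\{-a, b\}$ on which $T$ operates. On canonical space, I would build $V_n$ via an explicit chain of compositions of $T$ (and of its reflected variant $T \circ \rho_0$, which lies in $\TT_Q$ by Corollary~\ref{composition with the reflection at $0$}) with the reflections $\rho_{\tau_k}$ for $k < n$, each of which is in $\TT_Q$ by the inductive hypothesis. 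Lemma~\ref{composition with reflections} then propagates membership of $\TT_Q$ step by step through the chain.

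The main obstacle is the bookkeeping in this chain. At each level $k \le n$, the construction bifurcates according to whether $c_{k-1} < (b-a)/2$ or $c_{k-1} > (b-a)/2$, i.e., according to the successive binary digits of $a/(a+b)$ in the affine coordinate that sends $-a$ to $0$ and $b$ to $1$. The non-dyadicity hypothesis on $a/(a+b)$ is precisely what ensures this dichotomy is well-defined at every level and that $r_k > 0$ for all $k$, so the inductive construction never degenerates; once the identification is made, the transfer of $\TT_Q$-membership is automatic from the stability toolkit of Section~\ref{Stability properties}.
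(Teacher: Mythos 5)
Your reduction is sound as far as it goes: the identity $U_n\circ\rho_{\tau_{n-1}}=V_n$ is correct, so once $V_n\in\TT_Q$ is known, lemma~\ref{composition with reflections} (with the inductive hypothesis $\tau_{n-1}\in\TT_Q$) gives $U_n\in\TT_Q$ and corollary~\ref{min and max} gives $\tau_n=U_n\wedge V_n\in\TT_Q$. But the claim $V_n\in\TT_Q$ is the entire substance of the proposition, and you do not prove it: you announce that you ``would build $V_n$ via an explicit chain of compositions of $T$ with the reflections $\rho_{\tau_k}$'' and concede that ``the main obstacle is the bookkeeping.'' That bookkeeping is precisely the content of the paper's section~\ref{Key fact}: one must show that $T$ always coincides with some $\tau_m$ (lemma~\ref{identity}), compute the action of $\gamma=\rho_T\circ\rho_0$ on the sign sequence $\ep$ of the increments $X_{\tau_k}-X_{\tau_{k-1}}$, and prove the explicit binary-digit formula of lemma~\ref{explicit formula}. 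None of this is routine, so the proposal leaves the key step unestablished.

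Moreover, the target you reduce to is probably the wrong one, and the omission of the partition/mixture step hides this. The only stopping time given by hypothesis, $T=T_{-a}\wedge T_b$, is a \emph{two-sided} exit time, and the stability operations of section~\ref{Stability properties} (min, max, composition with reflections, optional mixtures) only ever produce from it stopping times that agree with two-sided exit times piecewise. Accordingly, the paper never shows that a one-sided passage time such as $V_n$ (e.g.\ $V_1=T_{-a}$ when $b>a$) belongs to $\TT_Q$; instead it shows that the two-sided time $\tau_n$ equals $T\circ\gamma^{M(e)}$ \emph{on each event} $A_e=\{(\ep_1,\dots,\ep_n)=e\}$, checks $A_e\in\FF^0_{T\circ\gamma^{M(e)}}$, and assembles these pieces with the optional-mixture lemma~\ref{mixture}. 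A single global composition will not do, and your sketch mentions neither the partition by sign patterns nor lemma~\ref{mixture}. Without either a construction of $V_n$ from $T$ and the available reflections, or a switch to the piecewise representation of $\tau_n$ itself, the inductive step does not close.
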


This statement, that will be proved in the next section, has a remarkable 
consequence. 
   
\begin{corollary}
If $0\in\TT_Q$ and $T\in\TT_Q$,
then the sequence $(Y_n)_{n \ge 0}$ of random variables defined 
on the probability space $(\wwf,\WW,Q)$ by
$$Y_n(w) = X_{\tau_{D_n}}(w) \text{ where } 
D_n(w) = \max\{k \le n : \tau_k(w) < +\infty\}.$$
is a martingale in the filtration 
$(\FF^0_{\tau_n})_{n \ge 0}$.
\end{corollary}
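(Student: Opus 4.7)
My plan is to check the three ingredients of the martingale property for $(Y_n)$: measurability, integrability, and the one-step conditional expectation. The first two are bookkeeping and I would dispose of them quickly. The events $\{D_n = k\}$ for $0 \le k \le n$ are built out of the events $\{\tau_j < \infty\}$, $j \le n$, so $D_n$ is $\FF^0_{\tau_n}$-measurable, and on $\{D_n = k\}$ we have $Y_n = X_{\tau_k}$ which is $\FF^0_{\tau_k} \subset \FF^0_{\tau_n}$-measurable. Integrability is clear: on $\{\tau_k < \infty\}$ one has $|X_{\tau_k} - X_{\tau_{k-1}}| = |c_k - c_{k-1}|$ by the very definition of $\tau_k$, so $|Y_n| \le \sum_{k=1}^n |c_k - c_{k-1}|$, a finite deterministic bound.

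The heart of the proof is the identity $\eef_Q[Y_{n+1} \mid \FF^0_{\tau_n}] = Y_n$. I would split along the three $\FF^0_{\tau_n}$-measurable events $\{\tau_n = +\infty\}$, $\{\tau_n < +\infty = \tau_{n+1}\}$ and $\{\tau_{n+1} < +\infty\}$. On the first two, $D_{n+1} = D_n$ and hence $Y_{n+1} = Y_n$ pointwise, so the identity is immediate. On the third, $D_n = n$ and $D_{n+1} = n+1$, so the identity reduces to
\[
\eef_Q\bigl[(X_{\tau_{n+1}} - X_{\tau_n})\,\one_{\{\tau_{n+1}<\infty\}}\,\one_A\bigr] = 0
\qquad \text{for every } A \in \FF^0_{\tau_n} \text{ with } A \subset \{\tau_n < \infty\}.
\]

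To obtain this I would invoke Proposition~\ref{key}, which yields $\rho_{\tau_n} \in \TT_Q$, together with the pathwise reflection formula. For any $w \in \wwf$, writing $w' = \rho_{\tau_n}(w)$, one has $\tau_n(w') = \tau_n(w)$ by Lemma~\ref{Galmarino}, and for $t \ge \tau_n(w)$ the identity $w'(t) - w'(\tau_n(w')) = -(w(t) - w(\tau_n(w)))$ shows that the rule defining $\tau_{n+1}$ applies to $w'$ at the same time as to $w$; hence $\tau_{n+1} \circ \rho_{\tau_n} = \tau_{n+1}$ while $(X_{\tau_{n+1}} - X_{\tau_n}) \circ \rho_{\tau_n} = -(X_{\tau_{n+1}} - X_{\tau_n})$. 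Since $A$ is $\rho_{\tau_n}$-invariant by Corollary~\ref{same order} and $Q$ is $\rho_{\tau_n}$-invariant, the change of variable $w \mapsto \rho_{\tau_n}(w)$ in the integral above flips its sign, forcing it to vanish. The main obstacle I anticipate is simply keeping the bookkeeping of the case $\tau_n = +\infty$ straight: there $\rho_{\tau_n}$ acts as the identity, and one must check that all the measurability and invariance claims degenerate gracefully, as well as that the splitting above genuinely lies in $\FF^0_{\tau_n}$ (which uses that $\tau_{n+1}$ is an $\FF^0$-stopping time with $\tau_{n+1} \ge \tau_n$).
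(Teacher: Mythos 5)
Your proposal is correct and is essentially the paper's own proof: both rest on Proposition~\ref{key} giving $\tau_n\in\TT_Q$, the invariance of events of $\FF^0_{\tau_n}$ and of $Q$ under $\rho_{\tau_n}$, and the sign flip $(Y_{n+1}-Y_n)\circ\rho_{\tau_n}=-(Y_{n+1}-Y_n)$ coming from $\tau_{n+1}\circ\rho_{\tau_n}=\tau_{n+1}$ (your three-case split is just a more verbose form of the paper's single identity $Y_{n+1}-Y_n=(X_{\tau_{n+1}}-X_{\tau_n})\one_{[\tau_{n+1}<+\infty]}$). The one slip is describing $\{\tau_{n+1}<+\infty\}$ as an $\FF^0_{\tau_n}$-measurable event (in general it is not), but your computation never actually needs this, since that indicator remains inside the integrand and is itself $\rho_{\tau_n}$-invariant.
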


\begin{proof}
Fix $n \ge 0$. The equality
$$Y_n(w) = 
\sum_{k=0}^{n-1} \one_{[\tau_{k}(w) < +\infty\ ;\ \tau_{k+1}(w) = +\infty]}
X_{\tau_k}(w) + \one_{[\tau_{n}(w) < +\infty]} X_{\tau_n}(w),$$
shows that $Y_n$ is measurable for $\FF^0_{\tau_n}$.
Moreover, from the equality
$$Y_{n+1}(w) - Y_n(w) 
= (X_{\tau_{n+1}}(w)-X_{\tau_n}(w)) \one_{[\tau_{n+1}(w) < +\infty]},$$
we deduce that $(Y_{n+1} - Y_n) \circ \rho_{\tau_n} = - (Y_{n+1} - Y_n)$. 
Take $A \in \FF^0_{\tau_n}$. Then $\rho_{\tau_n}^{-1}(A)=A$, since 
every $w \in \wwf$ coincide with $\rho_{\tau_n}(w)$ on $[0,\tau_{n}(w)]$. 
Since the reflection $\rho_{\tau_n}$ preserves $Q$, we get
$$\eef_Q[(Y_{n+1} - Y_n) \one_A] = 
\eef_Q\big[\big((Y_{n+1} - Y_n) \one_A\big)\circ \rho_{\tau_n} \big] 
= - \eef_Q[(Y_{n+1} - Y_n) \one_A],$$
which shows that $\eef_Q[Y_{n+1} - Y_n|\FF^0_{\tau_n}] = 0$. 
\end{proof}

We are now ready to prove proposition~\ref{bound}.

\begin{proof}
Fix $C \in \rrf_+$ such that $|X_{t \wedge S}(w)| \le C$ for every 
$t \in \rrf_+$ and $w \in \wwf$.
For each $w \in \wwf$, set $N(w) = \inf\{n \ge 1 : \tau_n(w) \ge S(w)\}$. 
Since $S(w)$ is finite and $\tau_n(w)$ is unbounded as $n \to +\infty$, 
$N(w)$ is finite. 

For every $n \ge 0$, $\{N \le n\} = \{\tau_n \ge S\} \in \FF^0_{\tau_n}$. 
Thus $N$ is a stopping time and $(Y_{n \wedge N})_{n \ge 0}$ is a martingale 
in the filtration $(\FF^0_{\tau_n})_{n \ge 0}$. Note that:
\begin{itemize}
\item for all $n < N(w)$, one has $\tau_{D_n}(w) < S(w)$ 
hence $|Y_n(w)| = |X_{D_n(w)}| \le C$,
\item and $|Y_N(w)| \le |Y_{N-1}(w)| + |Y_N(w)-Y_{N-1}(w)| \le C + (a+b)/2$.
\end{itemize}
This shows that the martingale $(Y_{n \wedge N})_{n \ge 0}$ is uniformly 
bounded, hence it converges in $L^1(Q)$ to $Y_N$ and 
$\eef_Q[Y_N]=\eef_Q[Y_0]=0$.

Note that $\tau_{N-1} < S < +\infty$, hence $Y_N=X_{\tau_N}$ or 
$Y_N=X_{\tau_{N-1}}$.
The inequalities $\tau_{N-1} < S \le \tau_N$ and the fact that the increments 
of $X$ are bounded by $a+b$ on each interval $[\tau_{n-1},\tau_n[$ yield 
$|X_S-Y_N| \le a+b$. This completes the proof. 
\end{proof}

\section{Proof of proposition~\ref{key}}
\label{Key fact}
 
We keep the notations of the previous section, and we introduce for every 
$n \ge 1$, 
$$\ep_n = \frac{Y_n-Y_{n-1}}{c_n-c_{n-1}} 
= \frac{X_{\tau_n}-X_{\tau_{n-1}}}{c_n-c_{n-1}} \one_{[\tau_n < +\infty]}.$$
For every $e=(e_n)_{n \ge 1} \in \{-1,0,1\}^\infty$, set 
$$m_0(e) = \inf\{n \ge 1 : e_n=0\},\quad
m(e) = \inf\{n \ge 1 : e_n=-1\}.$$
Call $\Sigma$ the set of all sequences 
$e=(e_n)_{n \ge 1} \in \{-1,0,1\}^\infty$ such that 
$e_n=0$ for all $n \ge m_0(e)$.  
Then $\ep=(\ep_n)_{n \ge 1}$ can be seen as a random variable 
with values in $\Sigma$. 

The first key point is that $T$ is always one of the times $(\tau_n)_{n \ge 1}$.

\begin{lemma}~\label{identity}
One has $T = \tau_{m \circ \ep}$ (remind the convention $\tau_\infty=+\infty$). 
Thus, for every $n \ge 1$, 
$\{m \circ \ep = n\} = \{T = \tau_n < +\infty\}$.
\end{lemma}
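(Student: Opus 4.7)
The plan is to track the position of $X$ at the times $\tau_k$ and show that, as long as $\ep_k = 1$, the process stays strictly inside $(-a,b)$, while the first time $\ep_k = -1$ coincides exactly with $X$ hitting $\{-a,b\}$. The key geometric fact is that $c_{k-1}$ is the midpoint of $\{c_k,d_k\}$, so $d_k = 2c_{k-1}-c_k \in \{-a,b\}$ while $c_k \in (-a,b)$; consequently the closed interval with endpoints $c_k$ and $d_k$ is contained in $[-a,b]$ and meets $\{-a,b\}$ only at $d_k$.

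The heart of the argument is an induction on $n$: on the event $\{\ep_1 = \cdots = \ep_n = 1\}$ (which forces each $\tau_k$ to be finite for $k \le n$), one has $X_{\tau_k} = c_k$ for every $k \le n$, and $X_t \in (-a,b)$ for every $t \in [0,\tau_n]$. The base case $n=0$ is immediate since $X_0=0=c_0$. For the inductive step, if $\ep_{n+1}=1$ then $X_{\tau_{n+1}} = X_{\tau_n} + (c_{n+1}-c_n) = c_{n+1}$; moreover, for $t \in [\tau_n,\tau_{n+1})$ the minimality in the definition of $\tau_{n+1}$ gives $|X_t - c_n| < |c_{n+1}-c_n|$, which forces $X_t$ to lie strictly between $c_{n+1}$ and $d_{n+1}$, hence inside $(-a,b)$.

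With this in hand, the proof splits along whether $m \circ \ep$ is finite. If $m \circ \ep = n < +\infty$, then $\ep_k \ne -1$ for $k<n$; furthermore $\ep_k = 0$ (i.e.\ $\tau_k = +\infty$) is impossible for $k \le n$, since it would force $\ep_n = 0 \ne -1$. So $\ep_k = 1$ for $k < n$, all $\tau_k$ with $k \le n$ are finite, and the induction yields $X_{\tau_{n-1}} = c_{n-1}$ with $X_t \in (-a,b)$ on $[0,\tau_{n-1}]$. Since $\ep_n=-1$, we get $X_{\tau_n} = 2c_{n-1}-c_n = d_n \in \{-a,b\}$, so $T \le \tau_n$; the same minimality argument on $[\tau_{n-1},\tau_n)$ shows $X_t \in (-a,b)$ there, so $T \ge \tau_n$, hence $T = \tau_n$. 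When $m \circ \ep = +\infty$, either $\ep_n=1$ for all $n$ (and the induction places $X$ in $(-a,b)$ on each $[\tau_{n-1},\tau_n]$, hence on all of $[0,+\infty)$ since the $\tau_n$ are unbounded), or $\ep_n = 0$ for some minimal $n$, in which case $\tau_n=+\infty$ and $X_t$ remains forever in the open interval with endpoints $c_n,d_n$, still inside $(-a,b)$. In both subcases $T = +\infty = \tau_\infty$.

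I expect the only mild subtlety to be the disciplined handling of the case $\tau_n = +\infty$ together with the convention $\tau_\infty = +\infty$; the heart of the matter is the geometric structure of the iteration $f$, and once one observes that $d_n \in \{-a,b\}$ whereas $c_n \in (-a,b)$, everything reduces to a straightforward induction on $n$.
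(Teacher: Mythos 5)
Your proof is correct and takes essentially the same route as the paper's: a recursion showing that while $\ep_k=1$ one has $X_{\tau_k}=c_k$ and the path stays in $(-a,b)$ between consecutive times (using that $c_{k-1}$ is the midpoint of $[c_k,d_k]$ and $|c_{k+1}-c_k|=d(c_k,\{-a,b\})$), so that the first index with $\ep_k=-1$ is exactly where $X$ hits $\{-a,b\}$, the infinite cases being handled via the convention $\tau_\infty=+\infty$. The only point left implicit is the one-line deduction of $\{m\circ\ep=n\}=\{T=\tau_n<+\infty\}$ from $T=\tau_{m\circ\ep}$, which holds because the sequence $(\tau_n)$ is increasing while finite and unbounded.
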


\begin{proof}
Fix $w \in \wwf$ and set $m=m(\ep(w))$ and $m_0=m_0(\ep(w))$. 

For every $n \ge 1$, by definition of $\tau_n$ and $\ep_n$, one has 
$$\begin{array}{rl}
\ep_n(w) = \pm 1 & \text{ if } \tau_n(w) < +\infty,\\
\ep_n(w) = 0   & \text{ if } \tau_n(w) = +\infty.
\end{array}$$
In particular, $\tau_n(w) = +\infty$ for every $n \ge m_0$ 
since the sequence $(\tau_n)_{n \ge 0}$ is non decreasing.
Thus, whether $m \le m_0$ or $m \ge m_0$, one has 
$\tau_{m \wedge m_0}(w)=\tau_{m}(w)$.

For every $k < m \wedge m_0$, $\ep_k(w)=1$ hence 
$w(\tau_k(w))-w(\tau_{k-1}(w)) = c_k-c_{k-1}$. 
A recursion then gives $w(\tau_k(w))=c_k \in ]-a,b[$. 
Moreover, for $\tau_k(w) \le t < \tau_{k+1}(w)$, 
$$|w(t)-c_k| = |w(t)-w(\tau_k(w))| < |c_{k+1}-c_k| = d(c_k,\{-a,b\}).$$
Hence for every $t \in [0,\tau_m(w)[$, $w(t) \notin \{-a,b\}$. This proves 
that $T(w) \ge \tau_m(w)$. 

If $m$ is infinite, then $T(w)$ is infinite.

If $m$ is finite, the equality
$$w(\tau_m(w))-w(\tau_{m-1}(w)) = -(c_m-c_{m-1}) = d_m-c_{m-1}$$
implies $w(\tau_m(w))=d_m \in \{-a,b\}$, hence $T(w) = \tau_m(w)$. 

The proof of the first statement is complete. Since the 
sequence $(\tau_n(w))_{n \ge 0}$ is increasing and unbounded,
the second statement follows.
\end{proof}

We can now describe the effect of the reflection 
$\rho_T$ on the sequence $\ep=(\ep_n)_{n \ge 1}$. 
For every $e=(e_n)_{n \ge 1} \in \Sigma$, define 
$r(e)=(f_n)_{n \ge 1} \in \Sigma$ by
$$f_n= \left\{\begin{array}{cl}
e_n & \text{ if } n \le m(e),\\
-e_n & \text{ if } n > m(e).
\end{array}\right.$$ 
Set $g(e)=r(-e)$ and $\gamma = \rho_T \circ \rho_0$.
Note that $g$ and $\gamma$ are bijective maps.

\begin{corollary}
With the notation above, the following properties hold.
\begin{enumerate}
\item The reflections $\rho_0$, $\rho_{T}$ and their 
composition $\gamma=\rho_T \circ \rho_0$ 
preserve the stopping times $\tau_n$.
\item One has $\ep \circ \rho_0 = -\ep$, 
$\ep \circ \rho_{T} = r \circ \ep$ and 
$\ep \circ \gamma = g \circ \ep$.
\end{enumerate}
\end{corollary}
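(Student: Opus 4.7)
The plan is to prove point 1 first---that $\rho_0$ and $\rho_T$ preserve each $\tau_n$---and then to read off point 2 by tracking how the two reflections act on the values $X_{\tau_n}$ at those now-preserved times.

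For point 1, the common mechanism is that both $\rho_0$ and $\rho_T$ preserve absolute increments of the path. Indeed $\rho_0(w)=-w$ (since $w(0)=0$), so $|\rho_0(w)(t)-\rho_0(w)(s)|=|w(t)-w(s)|$ for all $s,t$; and $\rho_T(w)$ coincides with $w$ on $[0,T(w)]$ while on $[T(w),+\infty)$ it is the reflection of $w$ about the horizontal line $y=w(T(w))$, which again preserves $|w(t)-w(s)|$ for $s,t\ge T(w)$. Since the $\tau_n$ are defined inductively by a condition involving only absolute increments, $|w(t)-w(\tau_{n-1}(w))|=|c_n-c_{n-1}|$, a routine induction starting from $\tau_0=0$ gives $\tau_n\circ\rho_0=\tau_n$ immediately. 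For $\rho_T$ I would use Lemma~\ref{identity}: if $m(\ep(w))=\infty$ then $T(w)=+\infty$ and $\rho_T$ is the identity; otherwise $T(w)=\tau_m(w)$ and one inducts on $n$, treating $n\le m$ by Galmarino (Lemma~\ref{Galmarino}), since $w$ and $\rho_T(w)$ agree on $[0,T]\supset[0,\tau_n]$, and $n>m$ by the absolute-increment preservation on $[T,+\infty)$, which is precisely where the remaining $\tau_n$ live. The statement for $\gamma=\rho_T\circ\rho_0$ then follows by composition.

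For point 2, everything reduces to combining $\tau_n\circ\rho_0=\tau_n$ and $\tau_n\circ\rho_T=\tau_n$ with short formulas for $X_{\tau_n}\circ\rho_0$ and $X_{\tau_n}\circ\rho_T$. For $\rho_0$, one has $X_{\tau_n}\circ\rho_0=-X_{\tau_n}$, so each $\ep_n$ is negated and $\ep\circ\rho_0=-\ep$. For $\rho_T$ I would again split at the index $m$: if $n\le m$ then $\tau_n\le T$, so paths coincide on $[0,\tau_n]$ and $\ep_n\circ\rho_T=\ep_n$; if $n>m$ then $\tau_{n-1}\ge T$ (the boundary case $n=m+1$ uses $\tau_m=T$) and $X_{\tau_n}\circ\rho_T=2X_T-X_{\tau_n}$, so the increment $X_{\tau_n}-X_{\tau_{n-1}}$ is negated and $\ep_n\circ\rho_T=-\ep_n$. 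These are exactly the relations defining $r$, giving $\ep\circ\rho_T=r\circ\ep$. Finally, using $g(e)=r(-e)$, $\ep\circ\gamma=\ep\circ\rho_T\circ\rho_0=r\circ(\ep\circ\rho_0)=r\circ(-\ep)=g\circ\ep$.

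Once point 1 is in hand the rest is bookkeeping, so the only delicate point is correctly handling the boundary index $n=m$ in point 2: one must check that $\ep_m\circ\rho_T=\ep_m$ (and not $-\ep_m$), in agreement with $r$ leaving the $m$-th coordinate untouched. This is forced by the equality $\tau_m=T$, which makes $X_{\tau_m}\circ\rho_T=X_{\tau_m}$ even though $\tau_m$ is neither strictly before nor strictly after $T$.
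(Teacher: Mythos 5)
Your proof is correct and follows the same route as the paper: the paper's (very terse) argument likewise rests on Lemma~\ref{identity} ($T=\tau_{m\circ\ep}$) together with the observation that $w$ and $\rho_T(w)$ have identical increments on $[0,T(w)]$ and opposite increments on $[T(w),+\infty[$, with the $\rho_0$ and $\gamma$ statements being immediate. Your write-up merely fills in the induction on $n$ and the boundary index $n=m$ that the paper leaves as ``follow immediately''/``obvious''.
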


\begin{proof}
Let $w \in \wwf$. The trajectories $w$ and $\rho_{T}(w)$ have the same 
increments on 
$[0,T(w)]$ and have opposite increments on $[T(w),+\infty[$. Since 
$T(w) = \tau_{m \circ \ep}(w)$, the results on $\rho_T$ follow immediatly. 
The other statements are obvious.
\end{proof}

For $n \in \nnf$, note $\one_n = (1,\ldots,1) \in \{-1,1\}^n$. 
For $(e_1,\ldots,e_n) \in \{-1,1\}^n$ and $\sigma \in \Sigma$,
note $(e_1,\ldots,e_n,\sigma) \in \Sigma$ the sequence obtained by 
concatenation. The next formula will play the same role as lemma~1 
of~\cite{ Chaumont - Vostrikova}. 

\begin{lemma}~\label{explicit formula}
Let $N = a_0+2a_1+\cdots+2^{n-1}a_{n-1}$ be a natural integer written 
in base $2$ with $n$ digits (the digit $a_{n-1}$ may be $0$). Then for 
every $\sigma \in \Sigma$,
$$g^N(\one_n,\sigma) = ((-1)^{a_0},\ldots,(-1)^{a_{n-1}},\sigma).$$
Moreover, if $n \ge 1$, 
$$g^{N-2^{n-1}}(\one_{n-1},-1,\sigma) 
= ((-1)^{a_0},\ldots,(-1)^{a_{n-1}},\sigma).$$
\end{lemma}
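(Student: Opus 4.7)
The plan is to prove the first identity by induction on $N$ and then deduce the second identity as a short corollary.

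The first preparatory step is to unpack $g$ explicitly. Writing $m^+(e) := m(-e) = \inf\{k \ge 1 : e_k = 1\}$, the definitions of $r$ and $g$ give directly that $g(e) = r(-e)$ coincides with $-e$ on $\{1, \ldots, m^+(e)\}$ and with $e$ on $(m^+(e), \infty)$. Informally, $g$ flips the signs of the initial block of $-1$'s together with the first $+1$ that follows, and leaves everything afterwards untouched. This is exactly the ``add one with carry'' rule in binary if one encodes digit $0 \leftrightarrow +1$ and digit $1 \leftrightarrow -1$, which is the source of the formula.

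The induction then runs on $N$ over $0 \le N \le 2^n - 1$. The base case $N = 0$ is trivial, since $g^0 = \mathrm{id}$ and $(-1)^0 = 1$. For the step, I fix $N$ with $N+1 \le 2^n - 1$ and let $k$ be the number of trailing $1$-digits of $N$, so that $a_0 = \cdots = a_{k-1} = 1$, $a_k = 0$, and $k < n$. By the inductive hypothesis,
$$g^N(\one_n, \sigma) = (-1, \ldots, -1, +1, (-1)^{a_{k+1}}, \ldots, (-1)^{a_{n-1}}, \sigma)$$
with $k$ leading $-1$'s, so $m^+ = k+1$, and one more application of $g$ flips exactly the first $k+1$ entries to yield
$$(+1, \ldots, +1, -1, (-1)^{a_{k+1}}, \ldots, (-1)^{a_{n-1}}, \sigma).$$
This is precisely the expansion of $N+1$, since $N+1$ replaces the $k$ trailing $1$'s of $N$ by $0$'s and carries a $1$ into position $k$. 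The main technical point is exactly this matching between binary carry and sign-flipping by $g$; once the explicit rule for $g$ above is in hand, it is a direct verification rather than a conceptual obstacle.

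For the second identity, I would apply the first identity to the specific value $N_0 = 2^{n-1}$, whose only nonzero binary digit is $a_{n-1} = 1$. This yields $g^{2^{n-1}}(\one_n, \sigma) = (\one_{n-1}, -1, \sigma)$. Because $g$ is a bijection (as noted in the paper), this rewrites as $g^{-2^{n-1}}(\one_{n-1}, -1, \sigma) = (\one_n, \sigma)$; composing with $g^N$ then gives
$$g^{N - 2^{n-1}}(\one_{n-1}, -1, \sigma) = g^N(\one_n, \sigma) = ((-1)^{a_0}, \ldots, (-1)^{a_{n-1}}, \sigma),$$
as required. This argument is uniform in $N$: it works whether $N \ge 2^{n-1}$ or $N < 2^{n-1}$, the latter case simply being read as an inverse iterate of $g$.
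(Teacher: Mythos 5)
Your proof is correct, and it takes a genuinely different (though closely related) route from the paper's. You induct on $N$ itself, after first making explicit that $g=r(-\,\cdot\,)$ negates an element $e$ exactly on the initial segment $\{1,\dots,m(-e)\}$, i.e.\ that $g$ implements the binary successor map ``increment with carry'' under the encoding $a_i=0\leftrightarrow +1$, $a_i=1\leftrightarrow -1$; the inductive step is then a single verification that one application of $g$ turns the encoding of $N$ into that of $N+1$ (the hypothesis $N+1\le 2^n-1$ guaranteeing that the first $+1$ occurs among the first $n$ entries, so $\sigma$ is untouched). The paper instead inducts on the number of digits $n$, splitting on the leading digit $a_n$ and, in the case $a_n=1$, invoking the induction hypothesis twice (for $2^n-1$ and for $N-2^n$) with one intermediate application of $g$ — which is the top-level carry in disguise. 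Your version makes the carry mechanism explicit and is arguably more transparent; the paper's version avoids describing the carry rule at the cost of a slightly more intricate induction. For the second identity the two arguments coincide: both specialize the first formula to $N_0=2^{n-1}$ and use the bijectivity of $g$ to invert. One small point worth stating explicitly in your write-up: the existence of the index $k<n$ (the position of the lowest $0$ digit of $N$) uses $N\le 2^n-2$, which is exactly what the bound $N+1\le 2^n-1$ provides; as you note, this is also what keeps the flip inside the first $n$ coordinates.
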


\begin{proof}
The first formula will be proved by induction on the number of digits. 
If $n=0$, then $N=0$ and the formula is obvious.

Assume the formula holds for all integers written with $n$ digits. 
Let $N = a_0+2a_1+\cdots+2^{n}a_{n}$ be an integer written with $n+1$ 
digits. 

If $a_n=0$, then it suffices to write $N$ with $n$ digits and 
to apply the induction hypothesis to the sequence $(1,\sigma)$. 

If $a_n=1$, let us apply the induction hypothesis to the integer 
$2^n-1 = 1+2+\cdots+2^{n-1}$ and to the sequence $(1,\sigma)$. We get  
$$g^{2^n-1}(\one_{n+1},\sigma) = (-\one_n,1,\sigma).$$
Applying $g$ once more yields 
$$g^{2^n}(\one_{n+1},\sigma) = (\one_n,-1,\sigma).$$
Applying the induction hypothesis to the integer 
$$
N-2^n = a_0+2a_1+\cdots+2^{n-1}a_{n-1}
$$ and to the sequence $(-1,\sigma)$
yields 
$$g^{N}(\one_{n+1},\sigma) = ((-1)^{a_0},\ldots,(-1)^{a_{n-1}},-1,\sigma),$$
which achieves the proof of the first formula. 

In particular, if $n \ge 1$, 
$g^{2^{n-1}}(\one_n,\sigma) = (\one_{n-1},-1,\sigma)$, hence
$$g^{-2^{n-1}}(\one_{n-1},-1,\sigma) = (\one_n,\sigma).$$ 
The second formula follows.
\end{proof}

Introduce $\Sigma_n \subset \{-1,0,1\}^n$ the subset of $n$-uples such that 
each 
component after a $0$ is $0$. Define the map $g$ from $\Sigma_n$ to itself 
as before. 

\begin{corollary}
For every $n \ge 1$ and $e=(e_1,\ldots,e_n) \in \Sigma_n$, there exists 
an integer $M(e)$ such that the event 
$A_e = \{(\ep_1,\ldots,\ep_n) = (e_1,\ldots,e_n)\}$ belongs to 
$\FF^0_{T \circ \gamma^{M(e)}}$ and $\tau_n = T \circ \gamma^{M(e)}$ 
on $A_e$. 
\end{corollary}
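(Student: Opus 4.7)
The plan is to leverage the identity $T = \tau_{m \circ \ep}$ from lemma~\ref{identity} together with the intertwining $\ep \circ \gamma = g \circ \ep$ and the fact (established just before) that $\gamma$ preserves each $\tau_n$. Since $\tau_n \circ \gamma^M = \tau_n$ for every integer $M$, the target equality $\tau_n = T \circ \gamma^{M(e)}$ on $A_e$ is equivalent to arranging, on $A_e$, that the first $-1$ of $g^{M(e)}(\ep(w))$ sit exactly at position $n$ when $\tau_n(w) < +\infty$, or that $g^{M(e)}(\ep(w))$ contain no $-1$ at all when $\tau_n(w) = +\infty$. Lemma~\ref{explicit formula} provides, for any prescribed $\pm 1$-pattern, a power of $g$ realizing the desired transformation, so the construction of $M(e)$ will be a direct application of that lemma after splitting into two cases.

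When $e \in \{-1,1\}^n$, we have $\tau_n(w) < +\infty$ on $A_e$, and we wish to arrange for $g^{M(e)}(\ep(w))$ to start with $(\one_{n-1},-1)$. Writing $a_i = (1-e_{i+1})/2 \in \{0,1\}$ and $N(e) = \sum_{i=0}^{n-1} a_i 2^i$, the second formula of lemma~\ref{explicit formula} reads $g^{N(e) - 2^{n-1}}(\one_{n-1},-1,\sigma) = (e_1,\ldots,e_n,\sigma)$ for every $\sigma\in\Sigma$, so setting $M(e) := 2^{n-1} - N(e)$ does the job: the prefix $(e_1,\ldots,e_n)$ of $\ep(w)$ is mapped to $(\one_{n-1},-1)$, whose first $-1$ is at position $n$. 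When instead $e$ has a zero, let $j\in\{0,\ldots,n-1\}$ be the position of the last $\pm 1$ entry of $e$ (with $j=0$ meaning $e$ is entirely zero); then on $A_e$ the whole sequence $\ep(w) = (e_1,\ldots,e_j,0,0,\ldots)$ is forced, and $\tau_n(w) = +\infty$. Applying the first formula of lemma~\ref{explicit formula} with $j$ in place of $n$ and $\sigma$ the null sequence produces an integer $N_j(e)$ with $g^{N_j(e)}(\one_j,0,0,\ldots) = (e_1,\ldots,e_j,0,0,\ldots)$; setting $M(e) := -N_j(e)$ (and $M(e)=0$ when $j=0$, since $g$ fixes the all-zero sequence) yields $g^{M(e)}(\ep(w)) = (\one_j,0,0,\ldots)$, a sequence with no $-1$, whence $T(\gamma^{M(e)}(w)) = +\infty = \tau_n(w)$.

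The measurability $A_e \in \FF^0_{T \circ \gamma^{M(e)}}$ is then a routine application of Galmarino's test: if $w_1 \in A_e$ and $w_2$ coincides with $w_1$ on $[0,T(\gamma^{M(e)}(w_1))] = [0,\tau_n(w_1)]$ (with the convention $+\infty$ in the second case), then the values $\ep_1(w),\ldots,\ep_n(w)$, being determined by the path up to $\tau_n(w)$, agree on $w_1$ and $w_2$, so $w_2 \in A_e$ and the same construction gives $T(\gamma^{M(e)}(w_2)) = \tau_n(w_2) = \tau_n(w_1)$. The main obstacle is simply keeping the bookkeeping straight in the second case: because $g$ preserves the position of the first zero, one cannot create a first $-1$ beyond a zero already present in $e$, which is why the target sequence must there be $(\one_j,0,0,\ldots)$ rather than $(\one_{n-1},-1,\ldots)$; matching this structural constraint is the only reason the case split is necessary.
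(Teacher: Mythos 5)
Your proof is correct and follows essentially the same route as the paper's: the same case split on whether $e$ contains a zero, the same exponents $M(e)=2^{n-1}-N(e)$ and $M(e)=-N_j$ extracted from lemma~\ref{explicit formula}, and the same use of $T=\tau_{m\circ\ep}$ and $\ep\circ\gamma=g\circ\ep$ together with the invariance of the $\tau_n$ under $\gamma$. The only (immaterial) difference is that you verify $A_e\in\FF^0_{T\circ\gamma^{M(e)}}$ by Galmarino's test directly, whereas the paper reads it off from rewriting $A_e$ as an event comparing the stopping times $T\circ\gamma^{M(e)}$ and $\tau_n\circ\gamma^{M(e)}$.
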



\begin{proof}
Set $(e_1,\ldots,e_n)=((-1)^{a_0},\ldots,(-1)^{a_{d-1}},0,\ldots,0)$
with $0\le d \le n$ and $a_0,\ldots,a_{d-1} \in \{0,1\}$.

If $d=n$, set $M(e) = 2^{n-1}-a_0-\cdots-2^{n-1}a_{n-1}$. 
Then by lemmas~\ref{explicit formula} and~\ref{identity}, 
\begin{eqnarray*}
A_e
&=& \{g^{M(e)} \circ (\ep_1,\ldots,\ep_n) = (\one_{n-1},-1)\}\\
&=& \{(\ep_1,\ldots,\ep_n) \circ \gamma^{M(e)} = (\one_{n-1},-1)\}\\
&=& \{m \circ \ep  \circ \gamma^{M(e)} = n\}\\
&=& \{T \circ \gamma^{M(e)} = \tau_n \circ \gamma^{M(e)} < +\infty\}
\end{eqnarray*}
Thus $A_e \in \FF^0_{T \circ \gamma^{M(e)}}$, and 
$\tau_n = \tau_n \circ \gamma^{M(e)} = T \circ \gamma^{M(e)}$ on $A_e$.

If $d \le n-1$, set $M(e) = -a_0-\cdots-2^{d-1}a_{d-1}$. Then by 
lemmas~\ref{explicit formula} and~\ref{identity}, 
\begin{eqnarray*}
A_e
&=& \{g^{M(e)} \circ (\ep_1,\ldots,\ep_n) = (\one_{d},0,\ldots,0)\}\\
&=& \{(\ep_1,\ldots,\ep_n) \circ \gamma^{M(e)} = (\one_{d},0,\ldots,0)\}\\
&=& \{m_0 \circ \ep  \circ \gamma^{M(e)} = d+1\ ;
\ m \circ \ep  \circ \gamma^{M(e)} = +\infty \}\\
&=& \{\tau_d \circ \gamma^{M(e)} < +\infty\ ;
\ T \circ \gamma^{M(e)} = \tau_{d+1} \circ \gamma^{M(e)} = +\infty\}
\end{eqnarray*}
Thus $A_e \in \FF^0_{T \circ \gamma^{M(e)}}$, and 
$\tau_n = \tau_n \circ \gamma^{M(e)} = +\infty = T \circ \gamma^{M(e)}$ on $A_e$.
\end{proof}

The last corollary and the stability properties given in 
lemmas~\ref{composition with reflections} and ~\ref{mixture} 
show that if $0 \in \TT_Q$ and $T \in \TT_Q$, then $\tau_n \in \TT_Q$
for all $n \ge 0$ (recall that $\tau_0 = 0$). 
This ends the proof of proposition~\ref{key}.

\section{Proof of the main theorem}
\label{Proof of the main theorem}

Let us now prove theorem~\ref{main result}. 

\begin{proof}
Call $Q$ the law of $M$ as before. The first step of the proof is 
the observation that for every integer $n \ge 1$, $T_{-a_n} \in \TT_Q$ 
by corollary~\ref{composition with the reflection at $0$}.
Hence for all integers $m,n \ge 1$, $T_{-a_n} \wedge T_{a_m} \in \TT_Q$ 
by corollary~\ref{min and max}. Lemma~\ref{non dyadic}, which will be 
stated and proved below, ensures that the ratios $a_n/(a_n+a_m)$ are not dyadic 
for arbitrarily large $m$ and $n$. For such $m$ and $n$, 
proposition~\ref{bound} applies and yields $\eef_Q[X_S] \le a_n+a_m$ 
for every finite stopping time $S$ (in the canonical filtration $\WW$)
such that the stopped process $X_{\cdot \wedge S}$ is uniformly bounded.
Since $(a_n)_{n \ge 1}$ converges to $0$, this proves that $\eef_Q[X_S]=0$,
hence $X$ is a local martingale under $Q$. 

The next arguments are the same as in~\cite{Chaumont - Vostrikova}
and we now summarize them. 

$Q$-almost surely, the process $X$ admits a quadratic variation 
$\langle X \rangle$ (defined as a limit in probability of sums of 
squared increments), which is preserved by the reflections 
$\rho_0$ and $\rho_{T_{a_n}}$. 
Consider a regular version of the conditional law of $X$ with respect 
to $\langle X \rangle$. For any continuous non-decreasing function 
$f : \rrf_+ \to \rrf_+$ such that $f(0)=0$, call $Q_f$ the law of $X$ 
conditionally on $\langle X \rangle = f$. 
Then for almost every $f$ (for the law of $\langle X \rangle$), 
the probability $Q_f$ is 
invariant by the reflections $\rho_0$ and $\rho_{T_{a_n}}$. 

By the part of the theorem which is already proven, $X$ is a local 
martingale under $Q_f$. But $\langle X \rangle = f$ almost surely 
under $Q_f$. Calling $\phi$ the right-continuous inverse of $f$, 
one gets that the process $B = (X_{\phi(s)})_{0 \le s < f(+\infty)}$ 
is a Brownian motion with lifetime $f(+\infty)$. 

Consider, in some suitable enlargement of the probability space $(\wwf,\WW,Q)$, 
a Brownian motion $W$, independent of $X$. For almost every $f$, 
the Brownian motion $W$ is still independent of $X$ under $Q_f$.
Since the local martingale $X$ converges $Q$-almost surely to a 
random variable $X_\infty$ on the event 
$\{\langle X \rangle_\infty < +\infty\}$, one gets a Brownian
motion $B$ defined on the whole interval $[0,+\infty[$ and independent 
of $\langle X \rangle$ by setting 
$$B_s = X_\infty + W_{s-\langle X \rangle_\infty} \text{ on the event } 
\{\langle X \rangle_\infty \le s\}.$$
Since $X_t=B_{\langle X \rangle_t}$ almost surely for all $t \ge 0$, 
this shows that $X$ is an Ocone local martingale under $Q$.

Assume now that $\langle X \rangle_\infty$ is finite with positive 
probability. Then for some $s \in \rrf_+$, $\langle X \rangle_\infty \le s$ 
with positive probability. But with positive probability, $B$ does not 
visit $a_1$ before time $s$. By independence of $B$ and $\langle X \rangle$,   
$$Q[T_{a_1} = +\infty] \ge Q[T_{a_1} \circ B > s]\ 
Q[\langle X \rangle_\infty \le s] > 0.$$
This shows that if $T_{a_1}$ is finite $Q$-almost surely,
then $\langle X \rangle_\infty$ is infinite $Q$-almost surely,
hence $X$ is almost surely divergent.
\end{proof}

Note that the proof of the last statement (if $T_{a_1}$ is finite 
$Q$-almost surely, then $X$ is almost surely divergent) given in the 
discrete case by Chaumont and Vostrikova 
(lemma 2 of~\cite{Chaumont - Vostrikova}) 
is not correct because they prove the implication 
$$T_a(M) \vee T_{-a}(M) < +\infty \text{ a.s. } \Longrightarrow 
T_{a+2}(M) \wedge T_{-a-2}(M) < +\infty \text{ a.s.,}$$ 
which is not sufficient to perform an induction. 
Yet, the same arguments 
that Chaumont and Vostrikova used to prove their lemma 1 are sufficient to 
prove their lemma 2. Our lemma~\ref{explicit formula} generalises these 
arguments, and the case in which some stopping time $T_a$ is infinite is 
covered by the possibility for the sequence of signs $\sigma \in \Sigma$ to  
be eventually $0$.

\begin{lemma}~\label{non dyadic}
If $c>b>a>0$, then at least one of the three following ratios $a/(a+b)$, 
$b/(b+c)$ and $a/(a+c)$ is not dyadic.
\end{lemma}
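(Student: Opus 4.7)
The plan is to suppose, for contradiction, that all three ratios are dyadic and then exploit $2$-adic divisibility. The starting observation is that $a/(a+b)$ is dyadic if and only if, in lowest terms, its denominator $(a+b)/\gcd(a,b)$ is a power of $2$, so one can write $a+b = 2^\alpha \gcd(a,b)$ for some integer $\alpha \ge 1$; since $a/\gcd(a,b)$ and $b/\gcd(a,b)$ are coprime positive integers whose sum is $2^\alpha$, both must be odd. Applying this reasoning to all three dyadic ratios, $a$, $b$ and $c$ share the same $2$-adic valuation and, being pairwise in rational ratio (each dyadic ratio is rational), can be jointly rescaled so that $a,b,c$ are positive odd integers with $a<b<c$.

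Next I would write
$$a+b=2^\alpha\gcd(a,b),\quad b+c=2^\beta\gcd(b,c),\quad a+c=2^\gamma\gcd(a,c),$$
with all three gcds odd and $\alpha,\beta,\gamma\ge 1$. The crucial step is to show that in fact $\alpha,\beta,\gamma\ge 2$. Indeed, the coprime positive integers $a/\gcd(a,b)$ and $b/\gcd(a,b)$ cannot both equal $1$ (otherwise $a=b$, contradicting $a<b$), so their sum $2^\alpha$ is at least $3$, giving $\alpha\ge 2$. The identical argument applied to $b<c$ and $a<c$ yields $\beta,\gamma\ge 2$.

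To conclude I would combine the three equations,
$$2a=(a+b)+(a+c)-(b+c)=2^\alpha\gcd(a,b)+2^\gamma\gcd(a,c)-2^\beta\gcd(b,c),$$
and observe that the right-hand side is divisible by $4$ (each summand is), while the left-hand side equals $2$ times an odd integer and therefore has $2$-adic valuation exactly $1$. This contradiction proves the lemma. I do not anticipate any real obstacle: the only substantive ingredients are the reformulation of dyadic-ness as $a+b=2^\alpha\gcd(a,b)$, and the observation that the strict inequalities $c>b>a>0$ force $\alpha,\beta,\gamma\ge 2$, which is precisely where the hypothesis is used.
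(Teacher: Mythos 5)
Your proof is correct, but it reaches the final contradiction by a genuinely different route from the paper's. The paper never normalises $a,b,c$: it writes the three ratios in lowest terms as $i/2^p$, $j/2^q$, $k/2^r$ with $i,j,k$ odd, observes that the ordering $c>b>a$ places all three ratios in $]0,1/2[$ and hence forces $p,q,r\ge 2$, and then combines the relations multiplicatively via $\frac{c}{a}=\frac{b}{a}\cdot\frac{c}{b}$ to obtain the cubic identity $2^rij+2^qik+2^pjk-2^{p+q}k=2ijk$, whose two sides disagree modulo $4$. You instead normalise first: rationality of the three ratios lets you reduce to integer $a,b,c$, the coprime-odd-summands observation shows the three numbers share a common $2$-adic valuation, and after dividing it out you may take $a,b,c$ odd; the contradiction then comes from the linear identity $2a=(a+b)+(a+c)-(b+c)$, whose right-hand side is divisible by $4$ while the left-hand side is twice an odd number. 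Both arguments rest on the same two ingredients (the exact power of $2$ in each reduced denominator is at least $4$, plus a relation among the three sums read modulo $4$), but your additive relation is simpler than the paper's multiplicative one, at the price of the preliminary rescaling --- which you should present in the right order: first invoke rationality of $a/b$, $b/c$, $a/c$ to reduce to integers, and only then speak of $\gcd(a,b)$. Note also that your derivation of $\alpha,\beta,\gamma\ge 2$ uses only that $a,b,c$ are pairwise distinct, so your argument in fact proves the lemma under that slightly weaker hypothesis.
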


\begin{proof}
The three ratios above belong to $]0,1/2[$. Assume that they are dyadic. 
Then  
$$\frac{a}{a+b} = \frac{i}{2^p},\quad \frac{b}{b+c} = \frac{j}{2^q},
\quad \frac{a}{a+c} = \frac{k}{2^r},$$  
where $i$, $j$ and $k$ are odd positive integers and $p$, $q$ and $r$ are integers greater 
or equal to $2$. Thus
$$\frac{2^r-k}{k} = \frac{c}{a} = \frac{b}{a} \times \frac{c}{b} 
= \frac{2^p-i}{i} \times \frac{2^q-j}{j},$$
$$ij(2^r-k) = k(2^p-i)(2^q-j),$$
$$2^rij + 2^qik + 2^pjk - 2^{p+q}k = 2ijk.$$
This is a contradiction since the left-hand side is a multiple of 4 
whereas the right-hand side is not. 
\end{proof}

\paragraph{Acknowledgements}
The authors thank Lo\"{\i}c Chaumont who aroused 
our interest in this topic, and the referee for a careful 
reading and for a simplified proof of lemma~\ref{mixture}.

\begin{flushleft}
Jean Brossard and Christophe Leuridan\\
Institut Fourier, Universit\'e Joseph Fourier et CNRS\\
BP 74, 38$\,$402 Saint-Martin-d'H\`eres Cedex, France\\
\textit{jean.brossard@ujf-grenoble.fr},
\textit{christophe.leuridan@ujf-grenoble.fr}
\end{flushleft}

\end{document}